\def\diag{\mathrm{diag\,}}
\newtheorem{theorem}{Theorem}
\newtheorem{lemma}{Lemma}
\newtheorem{proposition}{Proposition}
\theoremstyle{remark}
\theoremstyle{definition}
\newtheorem{definition}{Definition}
\newtheorem{example}{Example}
\newcommand{\col}{\mbox{col\,}}
\newcommand{\R}{{R^1}}
\newcommand{\C}{{\mathbb C}}
\newcommand{\N}{{\mathbb N}}
\newcommand{\SSS}{{\mathbb S}}
\newcommand{\ran}{{\mbox{ran}}}
\date{}
\title{ One-dimensional Schr\"{o}dinger operators\\ with
  $\delta'$-interactions\\ on a set of Lebesgue measure zero }
\author{ Johannes F. Brasche\thanks{ Institute of Mathematics, TU
Clausthal, Clausthal--Zellerfeld, 38678, Germany, e-mail:
johannes.brasche@tu-clausthal.de } and Leonid Nizhnik\thanks
     {Institute of Mathematics, 3 Tereshchenkivs'ka, Kyiv,
     01601,
    Ukraine,  e-mail: nizhnik@imath.kiev.ua} }
\begin{document}
\maketitle

\begin{abstract}

  We give an abstract definition of a one-dimensional Schr\"odinger
  operator with $\delta'$-interaction on an arbitrary set~$\Gamma$ of
  Lebesgue measure zero. The number of negative eigenvalues of such an
  operator is at least as large as the number of those isolated points
  of the set~$\Gamma$ that have negative values of the intensity
  constants of the $\delta'$-interaction. In the case where the
  set~$\Gamma$ is endowed with a Radon measure, we give constructive
  examples of such operators having an infinite number of negative eigenvalues.
\end{abstract}
{\bf PACS number:} 02.30.Tb, 03.65Db\\ {\bf AMS  Classification:} 47A55, 47A70. \\
\

\section{Introduction}\label{S:1}

\sloppy One important problem in the theory of singular
perturbations of a Schr\"{o}dinger operator is to construct
non--trivial self--adjoint operators that describe interactions on
a set $\Gamma$ of Lebesgue measure zero [3,4]. The most studied
case is the one where $\Gamma$ consists of isolated points. In
this case the corresponding interaction is called point
interaction and leads to solvable models in quantum mechanics
[3,4].

\fussy For an arbitrary closed set~$\Gamma$ of Lebesgue measure
zero, the Schr\"odinger operator with interaction on~$\Gamma$ is
defined as a self-adjoint extension of the minimal
operator~$-\frac{d^{2}}{dx^{2}}$ defined on functions in the
space~$C_0^{\infty}(R^1 \setminus \Gamma)$~[3,4,8,24],  In some
cases, other definitions of the Schr\"odinger operator with
interaction on~$\Gamma$ are possible. Such definitions are given
in terms of certain boundary conditions~[3,4], singular
perturbations~[4,5], quadratic forms~[1,14], construction of
BVS~[22,23], and other methods~[30]. If~$\Gamma$ is endowed with a
Radon measure, then Schr\"odinger operators with interactions
on~$\Gamma$ can be defined using analogues of the usual boundary
conditions on~$\Gamma$~[8,24].

\fussy In this paper, we give an abstract definition of a
Schr\"odinger operator~$L_{\Gamma, \delta'}$
with~$\delta'$-interaction on an arbitrary set~$\Gamma$ of
Lebesgue measure zero. If the set~$\Gamma$ contains isolated
points, then functions from the domain of such an operator satisfy
the usual boundary conditions for the $\delta'$-interaction with
some intensities in the isolated points of~$\Gamma$. In this case,
the number of negative eigenvalues of the operator~$L_{\Gamma,
\delta'}$ is not less than the number of isolated points
of~$\Gamma$ having negative intensities of $\delta'$-interaction
(Theorem~1). If~$\Gamma$ is endowed with a Radon measure, then the
Schr\"odinger operator with $\delta'$-interaction on~$\Gamma$ can
also be defined using boundary conditions on~$\Gamma$ (Theorem~2).
We give constructive examples of Schr\"odinger operators with
$\delta'$-interactions on~$\Gamma$ having an infinite number of
negative eigenvalues (Theorem~3). The classification of point
interactions for a one--dimensional Schr\"odinger
  operator is briefly  given in section 2.
In section 8 we give the
deficiency subspaces of the minimal operator so that it becomes possible
to determine the set of all Schr{\"o}dinger operators describing an interaction
which takes place inside $\Gamma$.

\section{Point interactions}\label{S:2}

The one-dimensional Schr\"odinger operator that describes a
one-point inter\-action in a point~$x_0$ is a self-adjoint
operator on the space~$L_2(R^{1})$ and, for~$x \neq x_0$, is given
by the differential expression~$-\frac{d^2}{dx^2}$. The maximal
domain of the operator~$-\frac{d^2}{dx^2}$ for~$x \neq x_0$ is the
Sobolev space~$W_2^2(R^1 \setminus \{x_0\})$. For
functions~$\varphi,\,\psi \in W_2^2(R^1\setminus \{x_0\})$, we
have the Lagrange formula
\begin{equation}\label{eq:1}
  (- \psi'', \varphi)_{L_2}-( \psi,- \varphi'')_{L_2}= \omega(\Gamma
  \psi,\Gamma \varphi),
\end{equation}
where the boundary form~$\omega$ is defined on the space~$E^4$ of
boundary values of the functions~$\psi$ and~$\varphi$,
$$
 \Gamma \psi= \col
(\psi(x_0+0),\psi(x_0-0), \psi'(x_0+0),\psi'(x_0-0)) \in E^4,
$$
by the formula
\begin{multline}\label{eq:2}
  \omega (\Gamma\psi, \Gamma\varphi)=\psi'(x_0+0)
  \bar{\varphi}(x_0+0)-
  \psi(x_0+0)\bar{\varphi'}(x_0+0)
  \\
  -\psi'(x_0-0)
  \bar{\varphi}(x_0-0)+ \psi(x_0-0)\bar{\varphi'}(x_0-0).
\end{multline}
Self-adjoint restrictions of the maximal operator are defined by
domains in terms of the corresponding boundary data that make a
Lagrangian plane in the space~$E^4$; it is a maximal subspace on
which the boundary form satisfies~$\varpi(\Gamma \psi,\Gamma
\psi)=0$. Since the boundary form~(\ref{eq:2}) can be represented
as
\begin{equation}\label{eq:3}
  \omega(\Gamma \psi,\Gamma \varphi)=  (\Gamma_1 \psi,\Gamma_2
  \varphi)_{E^2}-(\Gamma_2 \psi,\Gamma_1 \varphi)_{E^2},
\end{equation}
where~$\Gamma_1 \psi=\col (\psi'(x_0+0),-\psi'(x_0-0)),\, \Gamma_2
\psi= \col (\psi(x_0+0),\psi(x_0-0))$, the general self-adjoint
boundary conditions are given by a unitary matrix~$U$ operating on
the space~$E^2$,
\begin{equation}\label{eq:4}
\Gamma_1 \psi+i\Gamma_2 \psi=U(\Gamma_1 \psi-i\Gamma_2\psi).
\end{equation}
The matrix~$U$ uniquely parametrizes the Lagrangian planes. This
gives rise to a Schr\"odinger operator~$A_U$ on the
space~$L_2(R^{1})$ with the  domain consisting of all functions in
the space~$W_2^2(R^1 \setminus \{x_0\})$ satisfying boundary
condition~(4) and~$ A_U \psi= -\psi''(x),\,\,\,x\neq x_0 $. The
Schr\"odinger operator~$A_U$ that describes a point interaction in
the point~$x_0$ is characterized with the matrix~$U$.

Conditions~(4) contain split boundary conditions of the form
\begin{equation}\label{eq:5}
  \begin{array}{l}
    \psi(x_0+0)\cos \alpha_+- \psi'(x_0+0)\sin \alpha_+=0,\\[2mm]
    \psi(x_0-0)\cos \alpha_- - \psi'(x_0-0)\sin \alpha_-=0,
  \end{array}
\end{equation}
where~$\alpha_{\pm}\in (-\frac{\pi}{2},\frac{\pi}{2}]$. These
boundary conditions define a non-transparent interaction in the
point~$x_0$. Conditions~(5) correspond to a self-adjoint
Schr\"odinger operator~$A$ on the space~$L_2(R^{1} )=L_2(-\infty,
x_0 )\oplus L_2(x_0,+\infty)$. This operator can be decomposed
into the direct sum~$A=A_1 \oplus A_2$ of self-adjoint
operators~$A_1$ and~$A_2$ acting on the spaces~$L_2(-\infty, x_0
)$ and~$L_2(x_0,+\infty)$ that correspond to boundary
conditions~(5) in the points~$x=x_0-0$ and~$x=x_0+0$,
respectively.

A converse statement also holds true. If a self-adjoint
Schr\"odinger operator~$A$ describes a one point interaction and
admits a representation as a direct sum,~$A=A_1 \oplus A_2$, then
the functions in its domain satisfy boundary conditions~(5) with
some real numbers~$\alpha_{\pm}$.

Boundary conditions~(4) split if and only if the unitary
matrix~$U$ is diagonal,~$U=\diag (e^{2i \alpha_+}, e^{- 2i
\alpha_-}) $. In this case, boundary conditions~(4) are equivalent
to conditions~(5).

The one-dimensional Schr\"odinger operator corresponding to point
inter\-actions on a finite set~$X=\{x_1,...,x_n\}$ is a
self-adjoint extension, to the space~$L_2(R^1)$, of the minimal
operator~$L_{min,
  X}$ defined on the space
$C_0^{\infty}(R^1\setminus X)$ by~$L_{min,
  X}\varphi(x)=-\varphi''(x)$ [3, 4]. All such self-adjoint extensions are
described by Lagrangian planes in the Euclidean space~$E^{4n}$ of
boundary data for the functions~$\psi \in W_2^2(R^1\setminus X)$.
This leads to self-adjoint boundary
conditions given by unitary matrices acting on~$E^{2n}$. Localized
self-adjoint boundary conditions have the form of~(4) in every
point~$x_k\in X$, whereas localized indecomposable boundary
conditions have the form [3]
\begin{equation}\label{eq:6}
  \col( \psi(x_k+0), \psi'(x_k+0))=\Lambda_k\col( \psi(x_k-0), \psi'(x_k-0)),
\end{equation}
where the transmission matrices $\Lambda_k$ can be written as
$\Lambda_k=e^{i \eta_k}R_k$, where~$R_k$ is a real matrix, and
$\det R_k=1,$\, $\eta_k$ is a real constant.

The boundary form~(2) can be represented equivalently as
\begin{equation}\label{eq:7}
  \omega(\Gamma\psi, \Gamma\varphi)=(\hat{\Gamma}_1\psi,
  \hat{\Gamma}_2 \varphi)_{E_2}-(\hat{\Gamma}_2\psi, \hat{\Gamma}_2
  \varphi)_{E_2},
\end{equation}
where
\begin{equation}\label{eq:8}
  \hat{\Gamma}_1\psi= \col(\psi'_s, \psi_s),\,\,\, \hat{\Gamma}_2
  \psi=\col(\psi_r, -\psi'_r),
\end{equation}
\begin{equation}\label{eq:13}
  \begin{array}{l}
    \psi_s=\psi(x_0+0) - \psi(x_0-0); \qquad  \psi'_s=
    \psi'(x_0+0)-\psi'(x_0-0);\\ \psi_r= \frac{1}{2}[\psi(x_0+0)+
    \psi(x_0-0)];\quad \psi'_r=\frac{1}{2}[\psi'(x_0+0)+
    \psi'(x_0-0)].
  \end{array}
\end{equation}

By~(\ref{eq:7}), general self-adjoint boundary conditions in the
point~$x_0$ are defined with a unitary matrix~$\hat{U}$ acting on
the space~$E^2$ and have the form
\begin{equation}\label{eq:9}
  \hat{\Gamma}_1\psi+i\Gamma_2\psi=
  \hat{U}(\hat{\Gamma}_1\psi-i\Gamma_2\psi).
\end{equation}
The matrices~$\hat{U}$ and~$U$ in the boundary conditions~(4)
and~(\ref{eq:9}) are connected with each other via the relations
$$
\begin{array}{l}
\hat{U}=(3CUC+1)(3+3CUC)^{-1},\\
U=C^*(3-\hat{U})(3\hat{U}-1)C,
\end{array}
$$
where~$C=\frac{1}{\sqrt{2}} \left(\begin{array}{c r} 1 & -i \\
1  &   i
\end{array}
\right)$ is a unitary matrix.

Among one-point interactions, the following four cases are
important.
\begin{itemize}
\item [1)] The~$\delta$-interaction, or $\delta$-potential, with
  intensity~$\alpha$ is defined by the boundary conditions
  \begin{equation}\label{eq:10}
    \psi(x_0+0)-\psi(x_0-0)=0,\qquad \psi'(x_0+0)-\psi'(x_0-0)=\alpha
    \psi_r(x_0),
  \end{equation}
  where~$x_0$ is the interaction point. In this case, the
  $\Lambda$-matrix in the boundary conditions~(\ref{eq:6})
  has the form~$\Lambda= \left(\begin{array}{c r} 1 & 0 \\
      \alpha & 1
    \end{array}
  \right)$.
\item [2)] The $\delta'$-interaction with intensity~$\beta$ is defined
  by the boundary conditions
  \begin{equation}\label{eq:11}
    \psi'(x_0+0)-\psi'(x_0-0)=0,\qquad \psi(x_0+0)-\psi(x_0-0)=\beta
    \psi'_r(x_0).
  \end{equation}
  In this case, the $\Lambda$-matrix in the boundary
  conditions~(\ref{eq:6}) has the
  form~$\Lambda= \left(\begin{array}{c r} 1 & \beta \\
      0 & 1
    \end{array}
  \right)$.
\item [3)] The $\delta'$-potential with intensity~$\gamma$ is defined
  with the boundary conditions
  \begin{equation}\label{eq:12}
    \psi(x_0+0)-\psi(x_0-0)=\gamma \psi_r(x_0) ,\qquad
    \psi'(x_0+0)-\psi'(x_0-0)=-\gamma \psi'_r(x_0).
  \end{equation}
  An equivalent form of the boundary conditions~(\ref{eq:12})
  is~$\psi(x_0+0)=\theta\psi(x_0-0)$,
  $\psi'(x_0+0)=\theta^{-1}\psi'(x_0-0)$,
  where~$\displaystyle \theta=\frac{2+\gamma}{2-\gamma}$. In
  this case, the matrix~$\Lambda$ in the boundary conditions~(\ref{eq:6})
  is~$\Lambda= \left(\begin{array}{c l} \theta & 0 \\
      0 & \theta^{-1}
    \end{array}
  \right)$.
\item [4)] The $\delta$-magnetic potential with intensity~$\mu$ is
  defined in terms of the boundary conditions
  \begin{equation}\label{eq:13a}
    \psi(x_0+0)-\psi(x_0-0)=i \mu \psi_r(x_0) ,\quad
    \psi'(x_0+0)-\psi'(x_0-0)=i \mu \psi'_r(x_0),
  \end{equation}
  where~$i$ is the imaginary unit. An equivalent form of the boundary
  conditions~(\ref{eq:13a}) is~$\psi(x_0+0)=e^{i\eta}\psi(x_0-0)$,
  $\psi'(x_0+0)=e^{i\eta}\psi'(x_0-0)$, where~$\frac{\mu}{2}=
  \tan\frac{\eta}{2}$. In this case, $\Lambda$ in the boundary
  conditions~(\ref{eq:6}) is a multiple of the identity
  matrix,~$\Lambda=e^{i\eta}I$.
\end{itemize}

To explain the names and the physical meaning of the four types of
interactions listed above, consider at first the formal
Schr\"odinger operators~$L$,
\begin{equation}\label{eq:14a}
  L=-\frac{d^2}{dx^2}+\varepsilon \delta^{(j)}(x-x_0),\,\,
  j=0,1;\,\,
  \varepsilon=\alpha,\,\,j=0;\,\,\varepsilon=\gamma,\,\,j=1,
\end{equation}
the expression~$L\psi$ can be defined in the sense of distribution
theory for functions~$\psi \in W^{2}_2(R^1\setminus \{x_0 \})$.

Indeed, the expression~$-\frac{d^2}{dx^2}$ on such functions~$
\psi$, in the sense of distribution theory, is given by the
expression
\begin{equation}\label{eq:15}
  -\frac{d^2}{dx^2}\psi(x)=-\psi''(x)-\delta'(x-x_0)\psi_s(x_0)-\delta(x-x_0)\psi'_s(x_0).
\end{equation}

The product~$\delta^{(j)}(x-x_0)\psi(x) $ is well defined if~$
\psi \in C^{\infty}(R^1)$, that is, the function~$\psi$ is a
multiplicator for the Schwartz space~$C_0^{\infty}(R^1)$ of test
functions. In this case,
\begin{equation}\label{eq:16}
  \delta(x-x_0)\psi(x)=\psi(x_0)\delta(x-x_0),\,\,\delta'(x-x_0)\psi(x)=\psi_r(x_0)\delta'(x-x_0)-\psi_r'(x_0)\delta(x-x_0).
\end{equation}
The identity~(\ref{eq:16}) can be extended as to also encompass
discontinuous functions~$\psi \in C^{\infty}(R^1\setminus \{x_0
\})$ by defining the functionals~$\delta^{(j)}(x-x_0)$
by~$(\delta^{(j)}(x-x_0),
\varphi(x))=(-1)^j\varphi_r^{(j)}(x_0)$~[4]. Hence, with such a
definition, formulas~(\ref{eq:16}) hold if all~$\psi^{(j)}(x_0)$
in the right-hand sides of formulas~(\ref{eq:16}) are replaced
with~$\psi_r^{(j)}(x_0)$.

If (\ref{eq:15}) and~(\ref{eq:16}) are used in~(\ref{eq:14a}),
then the condition~$L\psi \in L_2(R^1)$ leads to~(\ref{eq:10})
if~$j=0$ and to~(\ref{eq:12}) if~$j=1$.

Consider now a one-dimensional Schr\"odinger operator with
magnetic field potential~$a$ and potential~$V$, that is,~$L = (i
\frac{d}{dx}+a)^2+V$, in the particular case where~$L=
-\frac{d^2}{dx^2}+2ia \frac{d}{dx} +ia'$ and~$a(x)=\mu \delta(x)$,
so that
\begin{equation}\label{eq:18}
  L_{\mu} \psi= - \frac{d^2\psi}{dx^2}+2ia \delta(x)
  \frac{d\psi}{dx}+i\mu \delta'(x)\psi(x).
\end{equation}
If we use expressions~(\ref{eq:15}),~(\ref{eq:16})
in~(\ref{eq:18}), then imposing the condition on~$\psi(x) \in
W^{2}_2(R^1\setminus \{x_0 \})$ that the distribution~$L_{\mu}
\psi$ is a usual function in~$L_2(R^1)$ leads to~(\ref{eq:13a}).
Hence, the boundary conditions~(\ref{eq:13a}) describe a magnetic field
with the potential~$a(x)=\mu \delta(x)$.

Particular forms of the boundary conditions~(11)--(14) can be
represented as
\begin{equation}\label{eq:18a}
  \left(\begin{array}{c} \psi_s'(x_0)\\
      \psi_s(x_0)
    \end{array}
  \right)= B \left(\begin{array}{c} \psi_r(x_0)\\
      -\psi_r'(x_0)
    \end{array}
  \right),
\end{equation}
where~$\psi_s$, $ \psi'_s$, $\psi_r$, and~$\psi_r'$ are defined
in~(9). The matrix~$B= \left(\begin{array}{c c} \alpha & \gamma-i\mu \\
    \gamma+i\mu & -\beta
\end{array}
\right)$ is self-adjoint and each condition in~(10)--(14) follows
from~(\ref{eq:18a}) by setting three of the four
parameters~$\alpha$, $\beta$, $\gamma$, $\mu$ to zero. For an
arbitrary self-adjoint matrix~$B$, the conditions~(\ref{eq:18a}) make
a particular case of self-adjoint boundary conditions of the form~(10)
with the unitary matrix~$\hat{U}=(B-i)^{-1}(B+i)$.

Note that the boundary conditions~(\ref{eq:18a}) do not contain all
non-splitting self-adjoint boundary conditions of the form~(4). In
particular, they do not include boundary conditions of the form
\begin{equation}\label{eq:19}
  \psi'(x_0+0)=
  i\lambda_{0}\psi(x_0-0),\,\,\,\psi'(x_0-0)=i\lambda_{0}
  \psi(x_0+0)
\end{equation}
with a real constant~$\lambda_{0}$. The boundary
conditions~(\ref{eq:19}) describe a point interaction, in the
point~$x=x_0$, transparent for the waves~$e^{i\lambda x}$
with~$\lambda=\lambda_{0}$. In this case, the function~$\psi=
e^{i\lambda_{0} x}$ satisfies the boundary
conditions~(\ref{eq:19}) and the Schr\"odinger equation. Boundary
conditions~(\ref{eq:19}) have the form~(6) with the matrix~$\Lambda= i\left(\begin{array}{c c} 0 & -\lambda_0^{-1} \\
    \lambda_{0} & 0
\end{array}
\right) $.

Let us also give a relation between the matrix~$\Lambda$ from the
boundary condition~(6) and the matrix~$B$ from the
conditions~(\ref{eq:18a}),
$$
\Lambda= \frac{1}{D}\left|\left| \begin{array}{c c}
      \theta_+& \beta \\
      \alpha & \theta_-
\end{array}
\right|\right|,
$$
where~$D=(1-\frac{i}{2}\mu)^{2}-\frac{1}{4}\alpha\beta-\frac{1}{4}\gamma^2$,
$\theta_{\pm}=(1\pm
\frac{\gamma}{2})^{2}+\frac{1}{4}\alpha\beta+\frac{1}{4}\mu^2$.

The Schr\"odinger operator~$L_{B}$ corresponding to the boundary
conditions~(\ref{eq:18a}) for a point interaction in the
point~$x_0=0$ can formally be represented with the following
expression containing the Dirac $\delta$-function and its
derivative~$\delta'(x)$,
\begin{equation}\label{eq:1d}
  L_{B}=-\frac{d^{2}}{dx^{2}}+\alpha\delta(x)(\cdot,
  \delta)-\beta\delta'(x)(\cdot,
  \delta')+(\gamma+i\mu)\delta'(x)(\cdot,
  \delta)+(\gamma-i\mu)\delta(x)(\cdot, \delta').
\end{equation}
Here the differentiation~$\frac{d^{2}}{dx^{2}}$ is understood in
the distribution sense, and the functionals~$(\cdot, \delta)$
and~$ (\cdot, \delta')$ are defined by~$(\psi,
\delta)=\psi_{r}(0)=\frac{1}{2}[\psi(+0)+\psi(-0)]$, $(\psi,
\delta')=-\psi'_{r}(0)=-\frac{1}{2}[\psi'(+0)+\psi'(-0)]$. The
domain of the operator~$L_{B}$ is defined by the
condition~$L_{B}\psi \in L_{2} (R^1)$ imposed on the
functions~$\psi$ [4].

It is well known [3,4] that a model for point interactions is
exactly solvable and can serve as a good approximation of real
Schr\"odinger operators if the potential~$v$ has small support in
a neighborhood of the point~$x_0$, that is,~$v(x)=0$ for~$|x-x_0|>
\varepsilon$, and the processes under the study have the
energy~$\lambda^{2}$ much less than~$\varepsilon^{-2}$. Here it is
assumed that, for the energies under consideration, the
matrix~$\Lambda_{\varepsilon}$ that connects values of
solutions~$\psi$ of the Schr\"odinger
equation~$[-\frac{d^{2}}{dx^{2}}+v]\psi=\lambda^{2}\psi$ and their
derivatives~$\psi'(x)$ for~$x=x_0-\varepsilon$ and
~$x_0+\varepsilon$, that is, $\col(\psi(x_0+\varepsilon),
\psi'(x_0+\varepsilon))=
\Lambda_{\varepsilon}\col(\psi(x_0-\varepsilon),
\psi'(x_0-\varepsilon))$, is close to the matrix~$\Lambda$ that
defines the boundary conditions~(\ref{eq:6}) for the point
interaction. Thus the Schr\"odinger operator with point
interaction can be considered as a limit (in a certain sense,
e.g., in the sense of uniform resolvent convergence),
as~$\varepsilon\rightarrow 0$, of Schr\"odinger operators with the
potentials~$v_{\varepsilon}(x)$
with~$\Lambda_{\varepsilon}\rightarrow\Lambda$
for~$\varepsilon\rightarrow 0$. Here, the
potentials~$v_{\varepsilon}(x)$ themselves may or may not have a
limit as~$\varepsilon\rightarrow 0$ even in the sense of
distributions. It can happen that their limit values, even if they
exist, do not determine the character and the intensity of the
point interaction.

Let us look at this phenomenon in greater details for the case of
$\delta'$-potentials; this case was considered in a number of
papers~[2, 16-19,21,28,29,31-35]. For a model of
$\delta$-potentials with intensity~$\alpha$, one can take a
sequence of regular potentials~$v_{\varepsilon}(x)\rightarrow
\alpha \delta(x)$ with~$\varepsilon\rightarrow 0$, for
example,~$v_{\varepsilon}(x)=\alpha
\varepsilon^{-1}\varphi(\frac{x}{\varepsilon})$, where the
compactly supported function~$\varphi$ is such
that~$\int\varphi(x)\,dx=1$. More complex potentials can be well
modeled on small intervals by a sum of several $\delta$-functions,
\begin{equation}\label{eq:2d}
  v_{\varepsilon}(x)=\sum_{j=1}^{N}\,
  \alpha_{j}(\varepsilon)\delta(x-x_j(\varepsilon)),
\end{equation}
where all~$x_j(\varepsilon)\rightarrow x_{0}$
for~$\varepsilon\rightarrow 0$. It is shown in~[6] that the
$\delta'$-interaction is well modeled with three approaching
$\delta$-functions that have special opposite sign increasing
intensities~$\alpha_{j}(\varepsilon)$. When modeling a
$\delta'$-potential  of intensity~$\gamma$, the number of terms in
representation~(\ref{eq:2d}) depends on the conditions to be
satisfied. Since the matrix~$\Lambda$ in the boundary
conditions~(\ref{eq:6}) is diagonal for the $\delta'$-potential of
intensity~$\gamma$, there are two necessary conditions on the
elements of the matrix~$\Lambda_{\varepsilon}$,
\begin{itemize}
\item [1)] $\lim\limits_{\varepsilon\rightarrow\ 0}(\Lambda_{\varepsilon})_{2,1}=0$,
\item [2)] $\lim\limits_{\varepsilon\rightarrow\ 0}(\Lambda_{\varepsilon})_{1,1}=
  (1+\frac{\gamma}{2})(1-\frac{\gamma}{2})^{-1}$.
\end{itemize}
These two conditions can be satisfied with two terms in
approximation~(\ref{eq:2d}),
\begin{equation}\label{eq:3d}
  v_{\varepsilon}(x)= \alpha_1
  \varepsilon^{-1}\delta(x)+\alpha_2\varepsilon^{-1}\delta(x-\varepsilon),
\end{equation}
where~$\alpha_1=\gamma (1-\frac{\gamma}{2})^{-1}$,
$\alpha_2=-\gamma (1+\frac{\gamma}{2})^{-1}$.

Here, the potentials~$v_{\varepsilon}$ do not have a limit
as~$\varepsilon\rightarrow 0$ in the sense of distributions. In
this case, the matrix~$\Lambda_{\varepsilon}$ can be written as a
product of three matrices~$\Lambda_{\varepsilon}=\Lambda_{2}
\Lambda_{\varepsilon}^{0}\Lambda_{1}$, where~$\Lambda_{j}=
\left(\begin{array}{c c} 1 & 0\\
    \alpha_j \varepsilon^{-1} & 1
\end{array}
\right)$,  $j=1,2$, $\Lambda_{\varepsilon}^{0}= \left(\begin{array}{c c} \cos \lambda\varepsilon & \frac{\sin \lambda\varepsilon}{\lambda}\\
    -\lambda \sin \lambda \varepsilon & \cos \lambda \varepsilon
\end{array}
\right)$. These matrices give a relation between the
solutions~$\psi(x)$ of the Schr\"odinger equation
$$
-\frac{d^2}{dx^2}\psi+v_{\varepsilon}\psi=\lambda^{2}\psi
$$
and its derivatives~$\psi'(x)$ in different points~$x$,
$$
\begin{array}{l}
\col(\psi(+0), \psi'(+0))=\Lambda_1\col( \psi(-0), \psi'(-0)),\\[2mm]
\col(\psi(\varepsilon-0),\psi'(\varepsilon-0))=\Lambda_{\varepsilon}^{0}\col(
\psi(+0), \psi'(+0)),\\[2mm] \col(\psi(\varepsilon+0),
\psi'(\varepsilon+0))=\Lambda_{2}\col( \psi(\varepsilon-0),
\psi'(\varepsilon-0)).
\end{array}
$$

Using the explicit form of~$\alpha_j$ we get
$$
\lim\limits_{\varepsilon\rightarrow 0}
\Lambda_{\varepsilon}=\left(\begin{array}{c c} \theta & 0\\
    0  &  \theta^{-1}
  \end{array}
\right),
$$
where~$\displaystyle \theta = \frac{2+\gamma}{2-\gamma}$. Hence,
the limit Schr\"odinger operator corresponds to a point
interaction having $\delta'$-potential of intensity~$\gamma$.

One can additionally require that~$v_{\varepsilon}(x) \rightarrow
\kappa \delta'(x)$ in~(\ref{eq:2d}) as~$\varepsilon\rightarrow 0$.
This can be achieved if we take
\begin{equation}\label{eq:4d}
  v_{\varepsilon}(x)= \alpha_{1}\varepsilon^{-1}
  \delta(x+\varepsilon)+\alpha_{2}\varepsilon^{-1}
  \delta(x)+\alpha_{3}\varepsilon^{-1} \delta(x-\varepsilon)
\end{equation}
in~(\ref{eq:2d}), where~$\alpha_{2}= \pm 2\gamma[
\gamma^{2}-4]^{-\frac{1}{2}}$,
$\alpha_{1}=\frac{\gamma}{2}+\frac{\alpha_{2}}{2}(1+\frac{\gamma}{2})$,
$\alpha_{3}=-\frac{\gamma}{2}-\frac{\alpha_{2}}{2}(1-\frac{\gamma}{2})$.

In the limit as~$\varepsilon\rightarrow 0$, the Schr\"odinger
operators with the potentials~$v_{\varepsilon}(x)$ of the
form~(\ref{eq:4d}) define point interaction of $\delta'$-potential
type with intensity~$\gamma$, and the limit~$v_{\varepsilon}(x)
\rightarrow \kappa \delta'(x)$ exists in the distribution sense,
where the
constant~$\kappa=\alpha_{1}-\alpha_{3}=\gamma+\alpha_{2}$ depends
on the choice of the sign of~$\alpha_{2}$ and, consequently, it
does not determine the intensity~$\gamma$. Moreover, considering
an expression of the form~(\ref{eq:2d}) for the
potentials~$v_{\varepsilon}(x)$ with four terms
\begin{equation}\label{eq:5d}
  v_{\varepsilon}(x)= \alpha_{1}\varepsilon^{-1}
  \delta(x)+\alpha_{2}\varepsilon^{-1}
  \delta(x-\varepsilon)+\alpha_{3}\varepsilon^{-1}
  \delta(x-2\varepsilon)+ \alpha_{4}\varepsilon^{-1}
  \delta(x-3\varepsilon),
\end{equation}
where~$\alpha_{1}=-1$, $\alpha_{2}=6$, $\alpha_{3}=-3 $,
$\alpha_{4}=-2$ we obtain~$\lim\limits_{\varepsilon \rightarrow 0}
v_{\varepsilon}(x) = 6 \delta'(x)$ in the sense of distributions.
On the other hand, it is easy to see
that~$\lim\limits_{\varepsilon
  \rightarrow 0} \Lambda_{3 \varepsilon}=I$, that is, if~$\varepsilon
\rightarrow 0$, the Schr\"odinger operators with
potentials~(\ref{eq:5d}) converge to a free Schr\"odinger
operator. By taking~$\alpha_{1}=\alpha_{4}=3$,
$\alpha_{2}=\alpha_{3}=-3$ in~(\ref{eq:5d}), we have~$
v_{\varepsilon}(x)\rightarrow 0$ and the Schr\"odinger operators
converge to a direct sum of operators on the
spaces~$L_2(-\infty,0)$ and~$L_2(0, +\infty)$ corresponding to the
Dirichlet conditions~$\psi(\pm 0 )=0$.

Let us remark that if the Schr\"odinger operators have potentials
in the form of~(\ref{eq:3d})---(\ref{eq:5d}), then the kernels of
the resolvents for these operators can be written explicitly
similarly to the case of the limit Schr\"odinger operator. This
yields that these operators converge, as~$\varepsilon\rightarrow
0$, in the sense of uniform resolvent convergence.

The above conclusions about Schr\"odinger operators with
potentials~(\ref{eq:3d})-- (\ref{eq:5d}) remain also true
if~$v_{\varepsilon}$ are piecewise constant or
even~$v_{\varepsilon}\in C_0^{\infty}(R^1)$ if they can well
approximate each term in~(\ref{eq:3d})---(\ref{eq:5d}).

Let us also make a remark on one more feature of point
interactions. If the support of the potential~$v_{\varepsilon}(x)$
belongs to the interval~$(-\varepsilon, \varepsilon)$ and its
components~$v^{-}_{\varepsilon}(x) = \theta
(-x)v_{\varepsilon}(x)$, $ v_{\varepsilon}^{+}(x)= \theta (x)
v_{\varepsilon}(x)$, where~$\theta$ is the unit Heaviside
function, determine point interactions with the corresponding
matrices~$\Lambda^{-}$ and~$\Lambda^{+}$,
as~$\varepsilon\rightarrow 0$, then the
potential~$v_{\varepsilon}(x)$ also gives rise to a point
interaction, as~$\varepsilon\rightarrow 0$, with the
matrix~$\Lambda=\Lambda^{+} \Lambda^{-}$. This leads to additivity
of intensities~$\alpha$ and~$\beta$ for $\delta$- and
$\delta'$-interactions, since they correspond to triangular
matrices~$\Lambda^{-}$, $\Lambda^{+}$, $\Lambda$. For point
interactions with $\delta'$-type potentials and $\delta$-magnetic
potentials, the intensities~$\gamma$ and~$\mu$ do not have such an
additivity property. Here, if~$\gamma_{-}$ and~$\gamma_{+}$ are
intensities of $\delta'$-potentials corresponding
to~$v_{\varepsilon}^{-}$ and~$v_{\varepsilon}^{+}$, then the total
intensity~$\gamma$ is found as~$\gamma=(\gamma_{-}+\gamma_{+}
)(1+\frac{1}{4}\gamma_{-}\gamma_{+})^{-1}$.  Thus, for point
interactions with $\delta'$-type potential and $\delta$-magnetic
potential, the ``additive'' characteristics of the intensities are
useful. The additive characteristic~$\xi$ for $\delta'$-potential
with intensity~$\gamma$ are defined by the
identities~$\frac{2+\gamma}{2-\gamma}=\pm e^{\xi_{\pm}}$, where
the sign ``$+$'' is taken if~$|\gamma|< 2$ and we take the sign
``$-$'' if~$|\gamma|> 2$. A more exact definition of additive
characteristic for point interactions with $\delta'$-potential is
the following. Additive characteristic is a pair~$(\xi,s)$
consisting of the number~$\xi$ and the sign~$s=\pm 1$. As
two-point interactions with $\delta'$-potentials having
characteristics~$(\xi_{1},s_{1})$ and~$(\xi_{2},s_{2})$ approach,
the total characteristic~$(\xi,s)$ is found
as~$(\xi,s)=(\xi_{1}+\xi_{2},s_{1}\cdot s_{2})$, which corresponds
to the above ``adding'' rule for the intensities~$\gamma_{-}$
and~$\gamma_{+}$.

For a point interaction with $\delta$-magnetic potential of
intensity~$\mu$, the $\Lambda$-matrix in the boundary
condition~(\ref{eq:6}) is a multiple of the identity
matrix,~$\Lambda= e^{i\eta}I$. Hence, it is convenient to take the
number~$\eta$ to be an ``additive'' characte\-ristic of the
~$\delta$-magnetic potential. There is a relation between~$\mu$
and~$\eta$, $\mu=2\tan\frac{\eta}{2}$. For two approaching point
interactions with $\delta$-magnetic potentials having
characteristics~$\eta_{1}$ and~$\eta_{2}$, the corresponding total
characteristic is~$\eta= \eta_{1}+\eta_{2}$.

It is not true that if the Schr\"odinger
operators~$-\frac{d^{2}}{dx^{2}}+v_{\varepsilon}(x)$ converge,
as~$\varepsilon\rightarrow 0$, to a Schr\"odinger operator with
point interaction of a certain type then the
operators~$-\frac{d^{2}}{dx^{2}}+k v_{\varepsilon}(x)$,
where~$k\neq 1$ is an arbitrary real constant, also converge to a
Schr\"odinger operator with point interaction of the same type. In
the general case, this is true only for $\delta$-potential. It is
shown in~[21] that, for special approximations of
$\alpha\delta'$-functions
where~$v_{\varepsilon}=\alpha\varepsilon^{-2}\psi(\frac{x}{\varepsilon})$,
$\int\,\psi(x)\,dx=0$, $\int\,x \psi(x)\,dx=-1$, the Schr\"odinger
operators have a limit that defines a point interaction of
$\delta'$-potential only for special ``resonance'' values
of~$\alpha$.

\begin{proposition}
  For a one-dimensional Schr\"odinger operator~$A$ with local
  interactions on a finite set~$X=\{ x_1,...,x_n\}$, to describe a
  $\delta'$-interaction it is necessary and sufficient that all the
  functions~$\chi(x) \in C_0^{\infty}(R^1) $ such that~$\chi'(x)\in
  C_0^{\infty}(R^1 \setminus X) $ belong to the domain of the
  operator~$A$ and the operator~$A$ does not admit a representation as a
  direct sum~$A=A_1 \oplus A_2$ of two self-adjoint operators on the
  spaces~$L_2(-\infty, a)$ and~$L_2(a,+\infty)$ for any~$a$.
\end{proposition}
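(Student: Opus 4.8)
The plan is to reduce the statement to a point-by-point analysis at each~$x_k$. Since the interactions are \emph{local}, the self-adjoint boundary conditions decouple across the points of~$X$, so the domain of~$A$ is determined at each~$x_k$ by a single Lagrangian plane~$\mathcal{L}_k$ in the space~$E^4$ of boundary data. I will work throughout in the singular/regular coordinates~$(\psi_s,\psi_s',\psi_r,\psi_r')$ of~(\ref{eq:13}), in which the boundary form~$\omega$ is given by~(\ref{eq:7})--(\ref{eq:8}); in these coordinates the~$\delta'$-interaction~(\ref{eq:11}) of intensity~$\beta_k$ is the plane spanned by~$\col(0,0,1,0)$ and~$\col(\beta_k,0,0,1)$.

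First I would translate the domain hypothesis. Any~$\chi\in C_0^\infty(R^1)$ with~$\chi'\in C_0^\infty(R^1\setminus X)$ is constant in a neighbourhood of every~$x_k$, so its boundary data at~$x_k$ is~$(\psi_s,\psi_s',\psi_r,\psi_r')=(0,0,c,0)$ with~$c=\chi(x_k)$. Choosing~$\chi$ to equal a prescribed constant near~$x_k$ and to vanish near every other point of~$X$ realizes, for arbitrary~$c$, the data~$\col(0,0,c,0)$ at~$x_k$ together with trivial data at the remaining points. Hence the first hypothesis is equivalent to requiring the vector~$v_r:=\col(0,0,1,0)$ to lie in~$\mathcal{L}_k$ for every~$k$.

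Next I would classify the Lagrangian planes through~$v_r$. The vector~$v_r$ is isotropic for the form~(\ref{eq:7}), and a second spanning vector~$u=\col(a,b,c,d)$ of a Lagrangian plane containing~$v_r$ must satisfy~$\omega(v_r,u)=0$ and~$\omega(u,u)=0$; a direct computation with~(\ref{eq:7})--(\ref{eq:8}) yields~$b=0$ and~$\mathrm{Im}\,(d\bar a)=0$, and after subtracting a multiple of~$v_r$ one may take~$u=\col(a,0,0,d)$. This leaves a one-parameter family indexed by~$\beta_k\in\RR\cup\{\infty\}$: for~$d\neq0$ one normalizes to~$u=\col(\beta_k,0,0,1)$ with~$\beta_k\in\RR$, which is exactly the~$\delta'$-plane~(\ref{eq:11}); for~$d=0$ one obtains~$u=\col(1,0,0,0)$, i.e.\ $\psi_s'=\psi_r'=0$, which are the Neumann conditions~$\psi'(x_k\pm0)=0$. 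The Neumann plane is of split form~(\ref{eq:5}), so at such a point~$A$ decomposes as~$A_1\oplus A_2$, whereas for finite~$\beta_k$ the conditions~(\ref{eq:11}) couple the two sides through the continuity of~$\psi'$ and cannot be split; away from~$X$ the operator is the free one and never decomposes. Thus the second hypothesis is precisely the exclusion of the endpoint~$\beta_k=\infty$ at each point, and the two hypotheses together force the genuine~$\delta'$-interaction~(\ref{eq:11}) with a finite intensity~$\beta_k$ at every~$x_k$. The necessity direction runs these equivalences in reverse: if~$A$ is a~$\delta'$-interaction then each such~$\chi$ satisfies~(\ref{eq:11}) and lies in~$\dom A$, and finiteness of every~$\beta_k$ prevents any splitting.

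The step I expect to be the main obstacle is the classification above: one must verify that imposing~$v_r\in\mathcal{L}_k$ together with the Lagrangian (self-adjointness) condition leaves exactly the~$\delta'$ and Neumann planes and nothing else, and one must correctly pair the degenerate endpoint~$\beta_k=\infty$ with decomposability. This rests on the explicit form of the boundary form in the coordinates~(\ref{eq:13}) and on the characterization, recalled in Section~\ref{S:2}, that split boundary conditions~(\ref{eq:5}) are exactly those that decompose the operator.
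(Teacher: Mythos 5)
Your proof is correct, but it organizes the argument along a genuinely different route than the paper, and the comparison is instructive. The paper's sufficiency argument uses the two hypotheses in the opposite order: from non-decomposability it first concludes that the conditions at each $x_k$ are of the non-splitting transmission form~(\ref{eq:6}), $\Lambda_k=e^{i\eta_k}R_k$ with $R_k$ real and $\det R_k=1$ (a classification cited from~[3]), and only then inserts the cutting function equal to $1$ near $x_k$ and $0$ near the other points; its boundary data give $\Lambda_k\col(1,0)=\col(1,0)$, which together with the structure of $\Lambda_k$ forces $\Lambda_k=\left(\begin{smallmatrix}1 & \beta_k\\ 0 & 1\end{smallmatrix}\right)$ with $\beta_k$ real. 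You instead use the cutting functions first, reduce hypothesis (i) to $v_r\in\mathcal{L}_k$, classify by hand all Lagrangian planes through $v_r$ (your computation is right: in the coordinates~(\ref{eq:13}) one gets $\omega(v_r,u)=-\bar b$, and after $b=0$ and removal of the $v_r$-component, $\omega(u,u)=2i\,\mathrm{Im}(d\bar a)$, leaving exactly the $\delta'$ family plus the two-sided Neumann plane), and invoke non-decomposability only to discard the Neumann endpoint. Your route buys self-containedness --- no appeal to the transmission-matrix form~(\ref{eq:6}) is needed --- and it makes visible precisely why hypothesis (ii) cannot be dropped: the Neumann plane does contain $v_r$, so all the cutting functions satisfy it, and it is exactly the split case~(\ref{eq:5}). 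The paper's route is shorter because it takes the classification~(\ref{eq:6}) of non-splitting local conditions as known. Both proofs share the same skeleton (locality yields per-point Lagrangian planes; split conditions~(\ref{eq:5}) are exactly the decomposable ones, as recalled in Section~2), and your necessity argument coincides with the paper's.
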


\begin{proof}
  Necessity follows, since the boundary conditions for a
  $\delta'$-interaction can not be represented in the
  form~(\ref{eq:5}), that is, the operator with $\delta'$-interaction
  can not be represented as~$A=A_1 \oplus A_2$. Moreover, each
  function~$\chi(x) \in C_0^{\infty}(R^1) $ satisfying~$\chi'(x)\in
  C_0^{\infty}(R^1 \setminus X) $ assumes constant values in small
  neighborhoods of the points~$x_k \in X$. Hence, this function
  satisfies the boundary conditions for $\delta'$-interaction on the
  set~$X$ with arbitrary intensities.

  Sufficiency follows, since if the operator~$A$ does not admit the
  representation~$A=A_1 \oplus A_2$ on the space~$L_2(-\infty, a)
  \oplus L_2(a,+\infty)$ and the function~$\chi(x) \in {\mathfrak
    D}(A)$ is distinct from zero only in a small neighborhood of the
  point~$x_k $ not containing other points of~$X$, the boundary
  condition~(\ref{eq:6}) leads to the matrix~$\Lambda= \left(\begin{array}{c c} 1 & \beta_k \\
      0 & 1
\end{array}
\right)$ with real~$\beta_k$. This corresponds to
$\delta'$-interaction in the point~$x_k$ with intensity~$\beta_k$.
\end{proof}

\section{Interactions on a set of measure zero}\label{S:3}

Let $\Gamma$ be a closed bounded subset of $R^1$ of Lebesgue
measure zero, $|\Gamma|=0.$ There is a symmetric minimal operator
$L_{min,
  \Gamma}$ defined on the space $L_2(R^1)$ by $L_{min, \Gamma
}\varphi(x)=-\varphi''(x)$ on functions $\varphi \in
C_0^{\infty}(R^1\setminus \Gamma)$. An operator adjoint
in~$L_2(R^1)$ to the operator~$L_{min, \Gamma}$ is maximal. Its
domain is~${\mathfrak D}(L_{\max, \Gamma})=W_2^2(R^1 \setminus
\Gamma)$.

\sloppy Each self-adjoint operator $A$, that is, a self-adjoint
extension of the operator~$L_{min, \Gamma }$, defines an
interaction on the set~$\Gamma$.

\fussy
\begin{definition}\label{D:1}
  We will say that a self-adjoint operator~$A \supset L_{min, \Gamma}$
  defines a local interaction on~$\Gamma$ if~$u(x) \in {\mathfrak
    D}(A)$ implies that~$\chi(x) u(x) \in {\mathfrak D}(A)$ for an
   arbitrary cutting function~$\chi(x) \in C_0^{\infty}(R^1)$ such
  that~$\chi'(x)\in C_0^{\infty}(R^1 \setminus \Gamma)$. In this case,
  we also say that the functions in~${\mathfrak D}(A)$ satisfy local
  boundary conditions.
\end{definition}
\begin{lemma}\label{L:1}
  Let~$A_{\Gamma}$ be a self-adjoint operator on the space
  $L_2(R^1)$  describing a local interaction on~$\Gamma$. Let
$a,b \not\in
  \Gamma,$\, $a< b.$  Then, the second Green formula holds true
  for any functions $f,g \in {\mathfrak D}(A_{\Gamma})$
\begin{equation}\label{eq:26d}
    \int\limits_{a}^b[(A_{\Gamma} f)(x)\overline{g(x)}-
    f(x)\overline{(A_{\Gamma}
    g)(x)}]\, dx=
    f(b)\overline{g'(b)}-f'(b)\overline{g(b)}-f(a)\overline{g'(a)}+f'(a)\overline{g(a)}.
  \end{equation}
  In case of $a=-\infty$ or $b=+\infty$ in the rightside of~(\ref{eq:26d})
  there are no terms of boundary data of functions $f,\,g$ at
  points $a=-\infty$ or $b=+\infty$.
\end{lemma}
\begin{proof}
Let $a_-<a$ and $b_+> b$ be such that the intervals $(a_-,a)$ and
$(b, b_+)$ contain no points of $\Gamma$.  Let $\varphi(x)\in
C_0^{\infty}(a_-,b_+)$ be a cutting function that equals to 1 with
$x \in (a,b)$.  The functions $f_0= \varphi \cdot f$ and $g_0= \varphi
\cdot g$  belong to domain of the operator $A_{\Gamma}$ since the
operator $A_{\Gamma}$ describes a local interaction on $\Gamma$
according to definition 1. The functions $f_0$,\ $g_0$,\,
$A_{\Gamma} f_0$,\ $A_{\Gamma} g_0$ coincide with $f$,\ $g$,\,
$A_{\Gamma} f$,\ $A_{\Gamma} g$ at $x \in (a,b)$, respectively.
Therefore the righthandside of~(\ref{eq:26d}) can be written in the
following form:
$$
\begin{array}{cl}
    \int\limits_{a}^b[(A_{\Gamma} f_0)(x)\overline{g_0(x)}-
    f_0(x)\overline{(A_{\Gamma}
    g_0)(x)}]\, dx= \\ [3mm] \int\limits_{a_-}^{b_+}\,[(A_{\Gamma} f_0)(x)\overline{g_0(x)}-
    f_0(x)\overline{(A_{\Gamma}
    g_0)(x)}]\, dx -
 \int\limits_{(a_-,a) \cup (b,b_+)}\,[-f_0''\overline{g_0}+f_0\overline{g_0''}]\, dx =\\ [4mm]
 (A_{\Gamma} f_0,
    g_0)-(f_0, A_{\Gamma} g_0)+
    f_0(b)\overline{g_0'(b)}-f_0'(b)\overline{g_0(b)}-f_0(a)\overline{g_0'(a)}+f_0'(a)\overline{g_0(a)}.
     \end{array}
 $$
This leads to equality~(\ref{eq:26d}) since the operator
$A_{\Gamma}$  is a self--adjoint on $L_2(R^1)$ and the functions
$f_0$,\ $g_0$ coincide with $f$,\ $g$ on the interval $[a, b]$.

 In case of $a=-\infty$, taking of $a_-=-\infty$, we do not have
 terms with boundary data of functions $f$ and  $g$ at the point
 $a=-\infty$. In the same way, in the case  when $b=+\infty$, we do not
 have terms of values $f$ and  $g$ at the point $b=+\infty$.
 Let us note that the boundary data $\Gamma_f=( f(a),
 f(a)',f(b),f(b)')$ of functions $f \in {\mathfrak D}(A_{\Gamma})$
 fill all the space $E^4.$
\end{proof}

\begin{proposition}
Let the conditions of Lemma 1 hold. Let $A_{\Gamma}^{(a, b)}$ be a
restriction of the operator $A_{\Gamma}$ on the space $L_2(a,b)$
acting as following $A_{\Gamma}^{(a,
b)}[\chi_{(a,b)}f]=\chi_{(a,b)} A_{\Gamma} f$  for any function $f
\in {\mathfrak D}(A_{\Gamma})$ such that $f(a)=f(b)=0.$ Here,
$\chi_{(a,b)} (x)$ is a characteristic function on the interval
$(a,b)$, i.e. $\chi_{(a,b)} (x)=1$ with $ x \in (a,b)$, and
$\chi_{(a,b)} (x)=0$ with $ x \notin (a,b).$  Then,
$A_{\Gamma}^{(a, b)}$ is a self--adjoint operator on the $L_2(a,
b)$.
\end{proposition}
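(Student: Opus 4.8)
The plan is to verify that $A_\Gamma^{(a,b)}$ is a symmetric operator and then that $A_\Gamma^{(a,b)}\pm i$ both map its domain onto all of $L_2(a,b)$; by the basic self-adjointness criterion these two facts give the assertion. Well-definedness is immediate: if $f,\hat f\in{\mathfrak D}(A_\Gamma)$ agree on $(a,b)$ then $A_\Gamma f=-f''=-\hat f''=A_\Gamma\hat f$ almost everywhere there, so $\chi_{(a,b)}A_\Gamma f$ depends only on $f|_{(a,b)}$. Symmetry is read off from Lemma~\ref{L:1}: for $u=\chi_{(a,b)}f$ and $v=\chi_{(a,b)}g$ with $f(a)=f(b)=g(a)=g(b)=0$, every summand on the right-hand side of~(\ref{eq:26d}) carries one of the factors $u(a),u(b),v(a),v(b)$ and hence vanishes, leaving $(A_\Gamma^{(a,b)}u,v)_{L_2(a,b)}=(u,A_\Gamma^{(a,b)}v)_{L_2(a,b)}$.

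For surjectivity of $A_\Gamma^{(a,b)}-i$ (the sign $+i$ being identical), fix $h\in L_2(a,b)$ and extend it by zero to $\tilde h\in L_2(\RR)$. As $A_\Gamma$ is self-adjoint there is $F\in{\mathfrak D}(A_\Gamma)$ with $(A_\Gamma-i)F=\tilde h$, so $(A_\Gamma-i)F=h$ on $(a,b)$; the only defect is that the values $F(a),F(b)$ need not vanish. I would remove this defect by subtracting a function $G\in{\mathfrak D}(A_\Gamma)$ for which $(A_\Gamma-i)G=0$ on $(a,b)$ while $G(a)=F(a)$ and $G(b)=F(b)$: then $u:=(F-G)|_{(a,b)}$ satisfies $u(a)=u(b)=0$, lies in ${\mathfrak D}(A_\Gamma^{(a,b)})$, and $(A_\Gamma^{(a,b)}-i)u=h$. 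The required $G$, restricted to $(a,b)$, solves the interior Dirichlet problem $-G''=iG$ off $\Gamma$ together with the interaction conditions that $A_\Gamma$ imposes at the points of $\Gamma\cap(a,b)$, with the prescribed endpoint values. Uniqueness of this interior solution is elementary and holds for any $\Gamma$: if the endpoint values vanish then integration by parts, in which the boundary contributions at the interior points of $\Gamma$ cancel because $G$ obeys the same form-annihilating conditions as the functions in ${\mathfrak D}(A_\Gamma)$, yields $\int_a^b|G'|^2\,dx=i\int_a^b|G|^2\,dx$, forcing $G\equiv0$.

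The substantial point---and the step I expect to be the main obstacle---is the existence of $G$: one must solve the interior Dirichlet problem on $(a,b)$ and then exhibit its solution as the restriction of a genuine element of ${\mathfrak D}(A_\Gamma)$. Both tasks are governed by the locality hypothesis of Definition~\ref{D:1}. For the extension I would first use the remark closing the proof of Lemma~\ref{L:1}, namely that the boundary data $(f(a),f'(a),f(b),f'(b))$ fill $E^4$, to pick $G_1\in{\mathfrak D}(A_\Gamma)$ with the correct Cauchy data at $a$ and $b$; the difference between $G_1|_{(a,b)}$ and the interior solution then has vanishing Cauchy data at $a,b$ and obeys the homogeneous interaction conditions inside, so that its extension by zero stays in ${\mathfrak D}(A_\Gamma)$ precisely because the interaction is local and $a,b\notin\Gamma$. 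Conceptually this is the statement that Dirichlet conditions at $a$ and $b$ are split, non-transparent conditions (cf.~(\ref{eq:5})) which decouple the interval $(a,b)$ from the rest of the line; the delicate part is to turn this picture into the assertion that the self-adjoint conditions defining $A_\Gamma$ genuinely factor across the regular points $a,b$, an argument that for a general---possibly perfect---set $\Gamma\cap(a,b)$ must rest on locality rather than on any finite-dimensional Lagrangian-plane bookkeeping.
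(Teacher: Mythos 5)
Your reduction of the problem is fine as far as it goes: well-definedness and symmetry are exactly the paper's own first two steps (locality of the differential expression, plus the Green formula~(\ref{eq:26d}) with all four endpoint terms killed by $f(a)=f(b)=g(a)=g(b)=0$), and the algebra reducing surjectivity of $A_\Gamma^{(a,b)}-i$ to the existence of $G\in{\mathfrak D}(A_\Gamma)$ with $(A_\Gamma-i)G=0$ on $(a,b)$, $G(a)=F(a)$, $G(b)=F(b)$ is correct. But that existence statement --- which you yourself identify as the main obstacle --- is never proved, and the sketch you offer cannot be carried out with the tools of the paper. It presupposes an ``interior solution'' of a Dirichlet problem on $(a,b)$ satisfying ``the interaction conditions that $A_\Gamma$ imposes at the points of $\Gamma\cap(a,b)$'': for the abstract operator of Definition~\ref{D:1} no such pointwise conditions exist when $\Gamma\cap(a,b)$ is not discrete (the paper derives explicit boundary conditions only at \emph{isolated} points of $\Gamma$), so there is no well-posed interior problem to solve; and even granting one, your matching argument is circular, since it manufactures $G$ out of the interior solution whose existence was the question. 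The appeal to locality in the last step also reverses Definition~\ref{D:1}: that definition says domain elements remain in the domain after multiplication by admissible cutting functions; it does not say that a function which locally ``looks like'' a domain element (zero Cauchy data at $a,b$ and ``homogeneous interaction conditions inside'') lies, after extension by zero, in ${\mathfrak D}(A_\Gamma)$. That converse is a strong structural statement about the domain of an abstractly given self-adjoint extension, and it is precisely what one may not assume.

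The paper's proof avoids solving any equation. Lemma~\ref{L:1} shows that for functions in ${\mathfrak D}(A_\Gamma)$ the Green form over $(a,b)$ collapses to the standard boundary form on $E^4$ carried by the two \emph{regular} points $a,b\notin\Gamma$ (the interior interaction contributes nothing), and the closing remark of that proof notes that the data $(f(a),f'(a),f(b),f'(b))$ fill all of $E^4$. The Dirichlet conditions $f(a)=f(b)=0$ are then self-adjoint (Lagrangian) boundary conditions in $E^4$ for the restricted maximal operator, and $A_\Gamma^{(a,b)}$ is the corresponding self-adjoint restriction. Note that this \emph{is} finite-dimensional Lagrangian-plane bookkeeping, but at $a$ and $b$ only; it is completely insensitive to how wild $\Gamma\cap(a,b)$ is, so your concluding claim that such bookkeeping must fail for a perfect set and that the argument must instead rest on locality has it backwards --- locality has already done its work inside Lemma~\ref{L:1}, and what remains is a two-point boundary-form argument. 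If you want to keep your range-criterion route, the missing existence of $G$ must be supplied, e.g.\ by taking $G=(A_\Gamma-i)^{-1}\eta$ with $\eta$ supported outside $[a,b]$ (so that automatically $(A_\Gamma-i)G=0$ on $(a,b)$) and proving that the map $\eta\mapsto(G(a),G(b))$ is onto ${\mathbb C}^2$; that is a genuine piece of work which your proposal does not contain.
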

\begin{proof}
The definition of the operator $A_{\Gamma}^{(a, b)}$ is correct,
since  the operator $A_{\Gamma}$ is a local operator $A_{\Gamma}
\psi(x)=-\psi''(x),$\,$x \notin \Gamma.$  Formula~(\ref{eq:26d})
shows that the operator $A_{\Gamma}^{(a, b)}$ is a symmetric
restriction of maximal operator for functions that correspond to
self--adjoint boundary conditions $f(a)=f(b)=0.$ Therefore,
$A_{\Gamma}^{(a, b)}$ is a self--adjoint operator on $L_2(a,b)$.
\end{proof}
\begin{lemma}\label{L:2}
  Let a self-adjoint operator~$A_{\Gamma}$ define a local interaction
  on~$\Gamma$. Let~$x_0 \in \Gamma$ be an isolated point of the
  set~$\Gamma$. Then the functions in~${\mathfrak D}(A_{\Gamma})$ satisfy local
  boundary conditions~(4) in the point~$x_0$.
\end{lemma}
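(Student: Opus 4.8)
The plan is to reduce the statement to the single-point analysis of Section~2 by showing that the space of boundary data at $x_0$,
$$V := \{\Gamma\psi : \psi \in {\mathfrak D}(A_\Gamma)\}, \qquad \Gamma\psi = \col(\psi(x_0+0),\psi(x_0-0),\psi'(x_0+0),\psi'(x_0-0)) \in E^4,$$
is a Lagrangian plane for the boundary form $\omega$ of~(\ref{eq:2})--(\ref{eq:3}). Once this is known, the parametrization of Lagrangian planes recalled after~(\ref{eq:3}) shows that every $\Gamma\psi$ satisfies~(\ref{eq:4}) for one fixed unitary matrix $U$, which is exactly the assertion.

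First I would fix the local picture: since $x_0$ is isolated in the closed set $\Gamma$, I choose $a<x_0<b$ with $a,b\notin\Gamma$ and $(a,b)\cap\Gamma=\{x_0\}$. The Sobolev embedding $W_2^2\hookrightarrow C^1$ on each side of $x_0$ makes $\psi\mapsto\Gamma\psi$ a well-defined linear map on ${\mathfrak D}(A_\Gamma)\subset W_2^2(R^1\setminus\Gamma)$, so $V$ is a linear subspace of $E^4$. Next I would establish that $V$ is $\omega$-isotropic. Integrating by parts separately on $(a,x_0)$ and $(x_0,b)$, and using $A_\Gamma=-d^2/dx^2$ off $\Gamma$, one gets for $f,g\in{\mathfrak D}(A_\Gamma)$ that $\int_a^b[(A_\Gamma f)\overline g-f\overline{A_\Gamma g}]\,dx$ equals the endpoint contributions at $a,b$ plus the term $\omega(\Gamma f,\Gamma g)$ produced at $x_0$. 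Comparing with Green's formula~(\ref{eq:26d}) of Lemma~1, whose right-hand side contains only the endpoint terms, forces $\omega(\Gamma f,\Gamma g)=0$; hence $V\subset V^{\perp_\omega}$, where $V^{\perp_\omega}:=\{c\in E^4:\omega(v,c)=0\ \text{for all}\ v\in V\}$.

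The main step, and the one I expect to be the real obstacle, is maximality: I must prove the reverse inclusion $V^{\perp_\omega}\subset V$. Given $c\in V^{\perp_\omega}$, I would choose $g\in W_2^2(R^1\setminus\Gamma)$ supported in $(a,b)$, vanishing together with $g'$ near $a$ and $b$, and carrying the prescribed boundary data $\Gamma g=c$ (all four components are freely prescribable by surjectivity of the boundary map for $W_2^2$ on each side of $x_0$). The same double integration by parts then gives, for every $f\in{\mathfrak D}(A_\Gamma)$,
$$(A_\Gamma f,g)_{L_2}-(f,-g'')_{L_2}=\omega(\Gamma f,c)=0,$$
so that $g\in{\mathfrak D}(A_\Gamma^{*})$ with $A_\Gamma^{*}g=-g''$ (the classical second derivative off $\Gamma$; the jump of $g'$ at $x_0$ is absorbed precisely into the $\omega$-term, which vanishes). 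Here self-adjointness is decisive: from $A_\Gamma^{*}=A_\Gamma$ one concludes $g\in{\mathfrak D}(A_\Gamma)$, hence $c=\Gamma g\in V$. Symmetry alone would only reproduce isotropy; it is the equality $A_\Gamma^{*}=A_\Gamma$ that upgrades $V$ to a maximal isotropic, i.e.\ Lagrangian, plane.

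With $V=V^{\perp_\omega}$ in hand, $V$ is a two-dimensional Lagrangian plane of $(E^4,\omega)$, so by the representation~(\ref{eq:3})--(\ref{eq:4}) it is the graph of a unitary matrix $U$ via $\Gamma_1\psi+i\Gamma_2\psi=U(\Gamma_1\psi-i\Gamma_2\psi)$. Since $\Gamma\psi\in V$ for every $\psi\in{\mathfrak D}(A_\Gamma)$, all such functions obey the local boundary conditions~(\ref{eq:4}) at $x_0$, which completes the argument. The only places where anything beyond bookkeeping is required are the vanishing of the $x_0$-boundary term (already delivered by Lemma~1) and the maximality step, whose essential input is the self-adjointness of $A_\Gamma$ rather than mere symmetry.
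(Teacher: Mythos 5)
Your proof is correct, and it reaches the same endpoint as the paper --- the boundary data of functions in ${\mathfrak D}(A_\Gamma)$ at $x_0$ form a Lagrangian plane of $(E^4,\omega)$, hence obey~(\ref{eq:4}) for a fixed unitary $U$ --- but the decisive maximality step is carried out by a different mechanism. The paper localizes the operator: it forms the restriction $A_\Gamma^{(a,b)}$ on $L_2(a,b)$ for an interval $(a,b)$ meeting $\Gamma$ only in $x_0$, proves in Proposition~2 (via the Green formula~(\ref{eq:26d}) of Lemma~1) that this restriction is self-adjoint, and then invokes the one-point classification of Section~2 to conclude that functions in its domain satisfy~(\ref{eq:4}); strictly speaking this gives the conclusion for functions vanishing at $a,b$, and one passes to arbitrary elements of ${\mathfrak D}(A_\Gamma)$ by the cutoff property of Definition~1. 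You never form the restricted operator: isotropy of your space $V$ comes from comparing the two-sided integration by parts against~(\ref{eq:26d}), which Lemma~1 supplies for \emph{all} domain functions (so no cutoff is needed), and maximality comes from a direct adjoint computation --- for $c\in V^{\perp_\omega}$ you construct a compactly supported $g\in W_2^2(R^1\setminus\Gamma)$ with boundary data $c$, verify $(A_\Gamma f,g)=(f,-g'')$ for every $f\in{\mathfrak D}(A_\Gamma)$, and use $A_\Gamma^{*}=A_\Gamma$ to place $g$ in ${\mathfrak D}(A_\Gamma)$. What your route buys is a self-contained argument that bypasses Proposition~2 entirely and makes explicit exactly where self-adjointness, as opposed to mere symmetry, enters; what the paper's route buys is brevity, since it reuses Proposition~2 and the extension-theory parametrization already set up for one-point interactions. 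Both arguments ultimately rest on Lemma~1 and on the Section~2 fact that Lagrangian planes are graphs of unitary matrices.
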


\begin{proof}
  Let~$x_0 \in \Gamma$ be an isolated point of the set~$\Gamma$. Then
  there exists an interval $(a, b)$ such that $(a, b)$ contains no
  other points of the set $\Gamma$ except for $x_0.$
Let us consider all functions $\psi,\varphi \in {\mathfrak
D}(A_{\Gamma})$ that equal to zero at $x=a,b$. Then,
$$(A_{\Gamma}^{(a, b)}\psi, \varphi)-(\psi,(A_{\Gamma}^{(a, b)}\varphi
)=\int\limits_{a}^b\, [-\psi''(x)\overline{\varphi(x)}-
\psi(x)\overline{\varphi''(x)} ]\,dx=\omega(\Gamma\psi,
\Gamma\varphi),
$$
where the  boundary form $\omega$ is defined in (2). Since the
operator $A_{\Gamma}^{(a, b)}$ is a self--adjoint operator on
$L_2(a,b)$ in virtue of Proposition 2 then functions $\psi \in
{\mathfrak D}(A_{\Gamma}^{(a, b)})$ must satisfy the boundary
condition (4).

\end{proof}

\begin{definition}\label{D:2}
  We will say that a self-adjoint operator~$A$ admits splitting
  boundary conditions in a point~$x_0 \in \Gamma$ if the operator~$A$
  on the space~$L_2(R^1)=L_2(-\infty, x_0 )\oplus L_2(x_0,+\infty)$
  admits a representation in the form of the direct sum~$A=A_1 \oplus
  A_2$ of a self-adjoint operator~$A_1$ on the space~$L_2(-\infty, x_0
  )$ and an operator~$A_2$ on the space~$L_2(x_0,+\infty)$.
\end{definition}

\begin{lemma}\label{L:3}
  Let~$x_0 \in \Gamma$ be an isolated point of the set~$\Gamma$. Let a
  self-adjoint operator~$A_{\Gamma}$ define a local interaction on~$\Gamma$
  and  not admit splitting boundary conditions in a
  point~$x_0$. Then the functions in~${\mathfrak D}(A)$ satisfy
  non-splitting boundary conditions in the point~$x_0$ of the form~(6).
\end{lemma}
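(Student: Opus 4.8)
The plan is to recast both hypotheses in terms of the boundary form~$\omega$ of~(2) and a Lagrangian plane, and then reduce everything to a $2\times2$ linear-algebra statement. By Lemma~2 the boundary data $\col(\psi(x_0+0),\psi(x_0-0),\psi'(x_0+0),\psi'(x_0-0))$ of functions in $\mathfrak D(A_\Gamma)$ fill a Lagrangian plane $L\subset E^4$ for~$\omega$; equivalently they satisfy~(4) with a unitary matrix~$U$ on~$E^2$. Split the boundary space as $E^4=E^2_+\oplus E^2_-$, where $E^2_+$ consists of the data with $\psi(x_0-0)=\psi'(x_0-0)=0$ and $E^2_-$ of the data with $\psi(x_0+0)=\psi'(x_0+0)=0$. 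Writing $v_\pm=\col(\psi(x_0\pm0),\psi'(x_0\pm0))$ one computes, directly from~(2), that $\omega(u,u)=v_+^*Kv_+-v_-^*Kv_-$ with $K=\left(\begin{smallmatrix}0&1\\-1&0\end{smallmatrix}\right)$. Hence each $E^2_\pm$ carries a non-degenerate restriction of $\omega$, and the two are mutually $\omega$-orthogonal complements.

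First I would note that $L$ is the graph of a transmission matrix over the minus-data exactly when the projection $\pi_-\colon L\to E^2_-$ is bijective, i.e. when $L\cap E^2_+=\{0\}$, since $\ker(\pi_-|_L)=L\cap E^2_+$ and $\dim L=\dim E^2_-=2$. The key intermediate step is that $L\cap E^2_+\ne\{0\}$ forces $L$ to split. Indeed, $E^2_+$ is non-degenerate, hence not isotropic, so $\dim(L\cap E^2_+)\le1$; taking a nonzero $u_0\in L\cap E^2_+$ and using the isotropy of $L$ gives $\omega(u_0,w)=0$ for every $w\in L$, and since the minus-part of $u_0$ vanishes this reduces to the condition that the plus-part of $w$ is $\omega$-orthogonal to $u_0$ inside $E^2_+$. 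As $\omega$ is non-degenerate on the two-dimensional $E^2_+$, that $\omega$-orthogonal complement is just $\mathbb C u_0$, so $\pi_+(L)\subseteq\mathbb C u_0$ is at most one-dimensional, whence $L\cap E^2_-=\ker(\pi_+|_L)$ is at least one-dimensional. By a dimension count $L=(L\cap E^2_+)\oplus(L\cap E^2_-)$ is a direct sum of a line in $E^2_+$ and a line in $E^2_-$, and the isotropy of each line makes the corresponding ratio real; this is precisely a pair of split conditions of the form~(5), so by the discussion of section~2 the operator decomposes as $A=A_1\oplus A_2$ at $x_0$, contrary to the hypothesis that $A$ does not admit splitting boundary conditions. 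Therefore $L\cap E^2_+=\{0\}$, and $L$ is a graph: $v_+=\Lambda v_-$, which is exactly the form~(6).

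It then remains to pin down the structure of $\Lambda$. Substituting $v_+=\Lambda v_-$ into the isotropy relation $v_+^*Kv_+=v_-^*Kv_-$ and letting $v_-$ range over $E^2$ yields $\Lambda^*K\Lambda=K$. Taking determinants gives $|\det\Lambda|^2\det K=\det K$, so $|\det\Lambda|=1$ and we may write $\det\Lambda=e^{2i\eta}$. A short argument — either solving the entry equations for $\Lambda^*K\Lambda=K$ directly, or observing that $\{e^{i\eta}R:\eta\in\RR,\ R\text{ real},\ \det R=1\}$ and the full group of $K$-unitary matrices are connected subgroups of the same dimension, with the inclusion immediate from $R^{T}KR=(\det R)K=K$ — identifies $\Lambda=e^{i\eta}R$ with $R$ real and $\det R=1$. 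This is precisely the transmission matrix appearing in the non-splitting boundary conditions~(6), completing the argument.

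The step I expect to be the main obstacle is the one in the second paragraph: ruling out the degenerate intermediate case in which $\pi_-$ has rank one. One must show that a single nonzero vector in $L\cap E^2_+$ already drags the whole plane into a split position, rather than merely obstructing the graph representation while leaving a genuinely non-split interaction; the symplectic-orthogonality computation $\pi_+(L)\subseteq\mathbb C u_0$ is exactly what closes this gap and lets one invoke the section~2 equivalence between splitting of the operator and conditions~(5). By contrast, the final identification $\Lambda=e^{i\eta}R$ from $\Lambda^*K\Lambda=K$ is routine.
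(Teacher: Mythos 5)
Your proof is correct and takes essentially the same route as the paper: Lemma~2 reduces the statement to classifying the Lagrangian plane $L$ of boundary data at the isolated point $x_0$, and the non-splitting hypothesis then rules out everything except the transmission form~(6). The paper's own proof is a one-line appeal to the known general form of non-splitting local boundary conditions (cited from [3]), whereas you supply a self-contained proof of that classification --- the dichotomy $L\cap E^2_+=\{0\}$ (graph) versus $L=(L\cap E^2_+)\oplus(L\cap E^2_-)$ (split conditions~(5), excluded by hypothesis via the Section~2 equivalence), plus the identification of the $K$-unitary matrices with $e^{i\eta}R$, $R$ real, $\det R=1$ --- so your argument fills in precisely the step the paper treats as standard.
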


\begin{proof}
  The proof follows from Lemma~2 and the general form of~(6) for
  non-splitting boundary conditions.
\end{proof}

\begin{definition}\label{D:3}
  We say that a self-adjoint operator~$A\supset L_{min, \Gamma}$
  describes a $\delta'$-interaction on~$\Gamma$ if the operator~$A$
  corresponds to local non-splitting boundary conditions on~$\Gamma$
  and all the functions~$\chi(x)\in C_0^{\infty}(R^1)$
  satisfying~$\chi'(x)\in C_0^{\infty}(R^1 \setminus \Gamma)$ belong
  to~${\mathfrak D}(A)$.
\end{definition}

\begin{lemma}\label{L:3}
  Let a self-adjoint operator~$A\supset L_{min, \Gamma}$ define a
  $\delta'$-interaction on~$\Gamma$. If~$x_0 \in \Gamma$ is an
  isolated point of the set~$\Gamma$, then there exists a real
  number~$\beta$ such that the functions in~${\mathfrak D}(A)$ satisfy
  boundary conditions~(12) for point $\delta'$-interactions with
  intensity~$\beta$.
\end{lemma}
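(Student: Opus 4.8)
The plan is to pin down the transmission matrix at $x_0$ by feeding the boundary condition the distinguished constant‑near‑$x_0$ functions that Definition~3 forces into $\mathfrak{D}(A)$. Since $A$ describes a $\delta'$-interaction, it corresponds to local non‑splitting boundary conditions, and because $x_0$ is isolated, Lemma~2 and Lemma~3 apply: the functions in $\mathfrak{D}(A)$ satisfy a non‑splitting condition of the form \eqref{eq:6} at $x_0$,
$$\col(\psi(x_0+0),\psi'(x_0+0))=\Lambda\,\col(\psi(x_0-0),\psi'(x_0-0)),\qquad \Lambda=e^{i\eta}R,\ \ R\ \text{real},\ \det R=1.$$
The entire task is then to show that this particular $\Lambda$ must equal $\bigl(\begin{smallmatrix}1&\beta\\0&1\end{smallmatrix}\bigr)$ for some real $\beta$.

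First I would use the second clause of Definition~3. Any $\chi\in C_0^\infty(R^1)$ with $\chi'\in C_0^\infty(R^1\setminus\Gamma)$ is constant on a neighborhood of the isolated point $x_0$ (exactly as observed in the proof of Proposition~1), so its boundary data are $\chi(x_0\pm0)=c$ and $\chi'(x_0\pm0)=0$, with $c$ an arbitrary value attained by some admissible $\chi$. Substituting these into \eqref{eq:6} gives $\Lambda\,\col(c,0)=\col(c,0)$ for all such $c$, so $\col(1,0)$ is a fixed vector of $\Lambda$. Writing $\Lambda=\bigl(\begin{smallmatrix}\lambda_{11}&\lambda_{12}\\ \lambda_{21}&\lambda_{22}\end{smallmatrix}\bigr)$, this forces $\lambda_{11}=1$ and $\lambda_{21}=0$, i.e. $\Lambda=\bigl(\begin{smallmatrix}1&\lambda_{12}\\0&\lambda_{22}\end{smallmatrix}\bigr)$.

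Next I would combine this with the structural form $\Lambda=e^{i\eta}R$. Reality of $R=e^{-i\eta}\Lambda$ makes $e^{-i\eta}\cdot 1$ real, so $e^{-i\eta}=\pm1$; and $\det R=e^{-2i\eta}\det\Lambda=e^{-2i\eta}\lambda_{22}=1$ gives $\lambda_{22}=e^{2i\eta}=1$. Reality of $R_{12}=e^{-i\eta}\lambda_{12}$ then makes $\lambda_{12}$ real, so $\Lambda=\bigl(\begin{smallmatrix}1&\beta\\0&1\end{smallmatrix}\bigr)$ with $\beta:=\lambda_{12}\in\RR$. Reading off \eqref{eq:6} for this $\Lambda$ yields $\psi'(x_0+0)=\psi'(x_0-0)$ and $\psi(x_0+0)-\psi(x_0-0)=\beta\,\psi'(x_0-0)=\beta\,\psi'_r(x_0)$, which is precisely the $\delta'$-interaction condition \eqref{eq:11} of intensity $\beta$.

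I expect the only genuinely delicate point to be the phase bookkeeping in the third step: one must verify that the fixed‑vector constraint $\Lambda\,\col(1,0)=\col(1,0)$ together with $\det R=1$ forces the scalar factor $e^{i\eta}$ to act trivially on every entry of $\Lambda$, so that no spurious unimodular factor survives. This is exactly where the stronger condition $\det R=1$ (rather than merely $|\det\Lambda|=1$) is essential; everything else reduces to the direct substitution carried out above.
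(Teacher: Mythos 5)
Your proof is correct and takes essentially the same route as the paper: the paper's own proof likewise feeds the cutting function that equals $1$ near the isolated point $x_0$ (placed in ${\mathfrak D}(A)$ by Definition~3) into the non-splitting boundary condition~(6), referring to the proof of Proposition~1, to conclude $\Lambda=\bigl(\begin{smallmatrix}1&\beta\\0&1\end{smallmatrix}\bigr)$ with real $\beta$. You merely make explicit the matrix algebra (the fixed vector $\col(1,0)$ combined with $\Lambda=e^{i\eta}R$, $R$ real, $\det R=1$) that the paper leaves implicit.
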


\begin{proof}
  The proof follows since a function~$\varphi_0(x)$, that equals~$1$ on a
  small neighborhood of the point~$x_0$, belongs to~${\mathfrak
    D}(A)$, and satisfies the local non-splitting boundary
  condition~(6) if and only if the boundary condition describes a
  $\delta'$-interaction (see the proof of Proposition~1).
\end{proof}

\section{Test functions for $\delta'$--interactions}\label{S:4}

Let~$x_0 \in \Gamma$ be an isolated point of a bounded closed
set~$\Gamma$ having Lebesgue measure zero. Let a self-adjoint
operator~$A$ define a $\delta'$-interaction on~$\Gamma$. In
particular, the functions~$\psi(x) \in {\mathfrak D}(A)$, in the
point~$x_0$, satisfy the boundary conditions
\begin{equation}\label{eq:28d}
  \psi'(x_0+0)=\psi'(x_0-0),\qquad \psi(x_0+0)-\psi(x_0-0)=\beta
  \frac{1}{2}[\psi'(x_0+0)+\psi'(x_0-0)]
\end{equation}
where~$\beta$ is intensity of the $\delta'$-interaction in the
point~$x_0$. We construct a function that belongs to~${\mathfrak
  D}(A)$, has compact support, satisfies condition~(\ref{eq:28d}) in
the point~$x_0$, and consists piecewise of parabolas and
constants.

\begin{definition}\label{D:4}
  Consider the following test function that depends on $4$
  parameters~$\varepsilon,\,\beta,\,l,\,r$:
  $$
  t(x,\varepsilon,\beta,l,r)=\left\{
    \begin{array}{ll}
      0,  &  x \leq -\varepsilon,\\[2mm]
      \frac{1}{2\varepsilon}(x+\varepsilon)^2,   & -\varepsilon \leq x < 0, \\[2mm]
      \beta+ \varepsilon -\frac{1}{2\varepsilon}(x-\varepsilon)^2,  & 0 < x \leq \varepsilon, \\[2mm]
      \beta+\varepsilon,   &  \varepsilon \leq x \leq l, \\[2mm]
      \beta+\varepsilon - \frac{\beta+\varepsilon}{2r^2} (l-x)^2,  &
      l\leq x \leq l+r,\\[2mm]
      \frac{\beta+\varepsilon}{2r^2} (l+2r- x)^2,  & l+r \leq x \leq
      l+2r,\\[2mm]
      0,  & l+2r \leq x.
    \end{array}
  \right.
  $$
\end{definition}
\begin{proposition}
The following $4$ properties of test functions easily follow from
Definition~4.
\begin{itemize}
\item[1)] A test function is twice (weakly) differentiable with
  compact support for~$x\neq 0$.
\item[2)] The test
  function~$\hat{t}(x)=t(x-x_0;\varepsilon,\beta,l,r)$ satisfies
  condition~(27) for a $\delta'$-interaction with intensity~$\beta$.
\item[3)] If~$0 < \varepsilon \leq \varepsilon_0$ is such that the $2
  \varepsilon_0$-neighborhood of the point~$x_0$ contains no points
  of~$\Gamma$ other than~$x_0$ and the value of~$l$ is larger than the
  diameter of ~$\Gamma$, then the test
  function~$\hat{t}(x)=t(x-x_0;\varepsilon,\beta,l,r)$ belongs to the
  domain of any self-adjoint operator~$A$ defining a
  $\delta'$-interaction on~$\Gamma$ and the$\delta'$-interaction in the
  point~$x_0$ with intensity~$\beta$.
\item[4)] If 3) holds, then~$\displaystyle (A\hat{t},
  \hat{t})=\beta+\frac{2}{3}\varepsilon
  +\frac{2}{3r}(\beta+\varepsilon)^2$.
\end{itemize}
 \end{proposition}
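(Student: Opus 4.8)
The plan is to treat 1), 2) and 4) as direct computations and to isolate 3)---membership of $\hat t$ in ${\mathfrak D}(A)$---as the one step requiring real argument. For 1) and 2) I would evaluate the one-sided limits of $t$ and $t'$ at the break points $-\varepsilon,\varepsilon,l,l+r,l+2r$ (centering at $x_0=0$) and check that both $t$ and $t'$ are continuous there while $t''$ is piecewise continuous and bounded; this gives $\hat t\in W_2^2(R^1\setminus\{x_0\})$ with support in $[x_0-\varepsilon,x_0+l+2r]$. At the single remaining point $x_0$ one reads off $t'(0-0)=t'(0+0)=1$ and $t(0+0)-t(0-0)=(\beta+\tfrac{\varepsilon}{2})-\tfrac{\varepsilon}{2}=\beta$, so $\hat t$ verifies the $\delta'$ condition (\ref{eq:28d}), equivalently (\ref{eq:12}), with intensity $\beta$.

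The heart of the proof is 3). First I would record the geometric consequence of the hypotheses $0<\varepsilon\le\varepsilon_0$ and $l>\operatorname{diam}\Gamma$: since the $2\varepsilon_0$-neighborhood of $x_0$ meets $\Gamma$ only in $x_0$, every other $y\in\Gamma$ satisfies $|y-x_0|\ge 2\varepsilon_0>\varepsilon$; points to the left then lie in $\{x\le x_0-\varepsilon\}$, where $\hat t\equiv 0$, and points to the right lie in $(x_0+\varepsilon,x_0+l)$, where $\hat t\equiv\beta+\varepsilon$. Thus $\hat t$ is locally constant near every point of $\Gamma$ except $x_0$, and in particular $\hat t\in W_2^2(R^1\setminus\Gamma)={\mathfrak D}(L_{\max,\Gamma})$. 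To conclude $\hat t\in{\mathfrak D}(A)={\mathfrak D}(A^{*})$ I would use self-adjointness in the form: $\hat t\in{\mathfrak D}(A^{*})$ iff the total boundary form $\omega_\Gamma(g,\hat t):=(Ag,\hat t)-(g,L_{\max,\Gamma}\hat t)$ vanishes for every $g\in{\mathfrak D}(A)$, the identification of $\omega_\Gamma$ with a sum of the brackets (\ref{eq:2}) over the points of $\Gamma$ being supplied by the second Green formula (\ref{eq:26d}) of Lemma~1 together with compactness of $\Gamma$ and of $\operatorname{supp}\hat t$.

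The obstacle is that the bracket of $g$ against $\hat t$ at a right-hand point $y$ equals $(\beta+\varepsilon)[\overline{g'(y+0)}-\overline{g'(y-0)}]$, which I cannot force to vanish pointwise at a non-isolated point of $\Gamma$. I would remove it by comparison with a function of the type admitted by Definition~3: choose $\chi_0\in C_0^{\infty}(R^1)$ with $\chi_0'\in C_0^{\infty}(R^1\setminus\Gamma)$ such that $\chi_0\equiv 0$ on a neighborhood of $x_0$ and near all left-hand points of $\Gamma$, while $\chi_0\equiv\beta+\varepsilon$ on $[x_0+\varepsilon,x_0+l]$, placing the transitions in the $\Gamma$-free intervals $(x_0,x_0+\varepsilon)$ and $(x_0+l,x_0+l+2r)$. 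Then $\chi_0\in{\mathfrak D}(A)$, so $\omega_\Gamma(g,\chi_0)=0$ by symmetry of $A$; moreover $\hat t-\chi_0$ vanishes identically in a neighborhood of every point of $\Gamma\setminus\{x_0\}$, so $\omega_\Gamma(g,\hat t-\chi_0)$ collapses to the single bracket at $x_0$. There $\hat t-\chi_0$ coincides with $\hat t$ and hence, like $g$ (by the lemma guaranteeing that at the isolated point $x_0$ the elements of ${\mathfrak D}(A)$ obey (\ref{eq:12}) with intensity $\beta$), lies in the same $\delta'$ boundary condition; since that condition cuts out a Lagrangian plane in $E^4$, the bracket vanishes. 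Therefore $\omega_\Gamma(g,\hat t)=\omega_\Gamma(g,\chi_0)+\omega_\Gamma(g,\hat t-\chi_0)=0$ for all $g$, giving $\hat t\in{\mathfrak D}(A)$.

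Finally, for 4) I would apply integration by parts to $A\hat t=-\hat t''$ separately on $(-\infty,x_0)$ and $(x_0,\infty)$: the interior boundary terms combine to $\hat t'(x_0+0)\hat t(x_0+0)-\hat t'(x_0-0)\hat t(x_0-0)=\beta$, leaving $(A\hat t,\hat t)=\beta+\int_{R^1}(\hat t')^2\,dx$. A piecewise integration of $(\hat t')^2$ contributes $\tfrac{\varepsilon}{3}$ from each of the two parabolic arcs at $x_0$ and $\tfrac{(\beta+\varepsilon)^2}{3r}$ from each of the two cut-off arcs on $[l,l+2r]$, i.e. $\tfrac{2}{3}\varepsilon+\tfrac{2}{3r}(\beta+\varepsilon)^2$, which is the asserted value. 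The only genuinely nontrivial step is the domain membership in 3); everything else is routine verification.
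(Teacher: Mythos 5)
Your proposal is correct, but it should be compared against the fact that the paper offers no proof at all of this proposition: it is stated as ``easily following'' from Definition~4, with the verifications left entirely to the reader. Your treatment of 1), 2) and 4) coincides with what the paper evidently intends — checking $C^1$ matching at the break points, reading off $\hat t'(x_0\pm 0)=1$ and $\hat t(x_0+0)-\hat t(x_0-0)=\beta$, and computing $(A\hat t,\hat t)=\beta+\int|\hat t'|^2\,dx=\beta+\tfrac{2}{3}\varepsilon+\tfrac{2}{3r}(\beta+\varepsilon)^2$ by integration by parts; all of these computations are accurate. Where you genuinely go beyond the paper is 3): you prove $\hat t\in{\mathfrak D}(A)$ by a duality argument, showing $(Ag,\hat t)-(g,-\hat t\,'')=0$ for every $g\in{\mathfrak D}(A)$ and invoking $A=A^*$, and you correctly recognize the real difficulty — that boundary brackets at non-isolated points of $\Gamma$ cannot be handled pointwise — and remove it by splitting $\hat t=\chi_0+(\hat t-\chi_0)$, where $\chi_0$ is a cutting function admitted by Definition~3 (hence in ${\mathfrak D}(A)$, so its form vanishes by symmetry), while $\hat t-\chi_0$ vanishes identically near every point of $\Gamma\setminus\{x_0\}$, so its form localizes to the single bracket at $x_0$, which vanishes because both $g$ (by the paper's lemma on isolated points) and $\hat t$ satisfy the same self-adjoint condition~(12) with the same real $\beta$. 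This is sound, and it supplies precisely the step that the paper's ``easily follows'' glosses over; indeed 3) is the one assertion that is not a computation, since membership in ${\mathfrak D}(A)$ for an \emph{abstractly} given self-adjoint $A$ requires exactly such an adjointness argument. The only caveat is presentational: your appeal to Lemma~1 must be understood as reusing its localization/integration-by-parts technique rather than the lemma verbatim (there both functions lie in ${\mathfrak D}(A_\Gamma)$, whereas here $\hat t-\chi_0$ is not yet known to), but since $\hat t-\chi_0$ vanishes on a neighborhood of $\Gamma\setminus\{x_0\}$ the argument goes through unchanged.
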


\section{Number of negative eigenvalues for
  $\delta'$-inter\-action}\label{S:5}

It is well known~[7, 23] that for point $\delta'$-interactions
at finitely many points
the
number of negative eigenvalues of the Schr\"odinger operator
equals the number of points having negative intensities of the
$\delta'$-interactions.
In the case of infinitely many points it can happen that the point spectrum
is empty, cf. \cite{AlGHH}, Theorem 3.6.
 However, for bounded $\Gamma$ and if the negative spectrum is discrete,
there is the
following generalization of the mentioned result on the number of
negative eigenvalues.

\begin{theorem}\label{Th:1}
Let~$A_{\Gamma,\delta'}$ be a self-adjoint Schr\"odinger operator
on~$L_2(R^1)$ with
  $\delta'$-interaction on a closed bounded set~$\Gamma$ of Lebesgue
  measure zero. Let the negative spectrum of
  the operator~$A_{\Gamma,\delta'}$ be discrete. Then
  the number of negative eigenvalues of the operator~$A_{\Gamma,\delta'}$ is not less than the
  number of isolated points of the set~$\Gamma$ having negative values
  of intensities of the $\delta'$-interactions.
\end{theorem}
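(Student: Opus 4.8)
The plan is to use the variational (min-max) principle. Write $N$ for the number of isolated points of $\Gamma$ carrying negative intensity. To prove the bound it suffices to exhibit an $N$-dimensional subspace $\mathcal{L}\subset\mathfrak{D}(A_{\Gamma,\delta'})$ on which the quadratic form $u\mapsto(A_{\Gamma,\delta'}u,u)$ is negative definite. Since the negative spectrum is assumed discrete we have $\sigma_{\mathrm{ess}}(A_{\Gamma,\delta'})\subset[0,\infty)$, so the existence of such an $\mathcal{L}$ forces $\dim\operatorname{Ran}E_{(-\infty,0)}(A_{\Gamma,\delta'})\ge N$; discreteness then identifies this dimension with the number of negative eigenvalues counted with multiplicity.

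Let $x_1,\dots,x_N$ be the isolated points with negative intensities $\beta_1,\dots,\beta_N<0$. For each $k$ I would take the shifted test function $\hat t_k(x)=t(x-x_k;\varepsilon_k,\beta_k,l_k,r_k)$ of Definition~4, with $\varepsilon_k$ so small that the $2\varepsilon_k$-neighborhood of $x_k$ meets $\Gamma$ only at $x_k$, and $l_k$ larger than the diameter of $\Gamma$. By Proposition~3 each $\hat t_k$ lies in $\mathfrak{D}(A_{\Gamma,\delta'})$: it is locally constant near every point of $\Gamma$ except $x_k$, where it realizes the $\delta'$-condition~(27) with intensity $\beta_k$, and part~4 gives the diagonal value $(A_{\Gamma,\delta'}\hat t_k,\hat t_k)=\beta_k+\tfrac23\varepsilon_k+\tfrac{2}{3r_k}(\beta_k+\varepsilon_k)^2$. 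Setting $\mathcal{L}=\operatorname{span}\{\hat t_1,\dots,\hat t_N\}$, I then compute the form matrix $M_{jk}=(A_{\Gamma,\delta'}\hat t_j,\hat t_k)$. Applying the Green formula of Lemma~1 on each side of the jump points and using that $\hat t_k$ has a single jump (at $x_k$) across which its derivative is continuous, one finds that for $j\ne k$ the boundary term at $x_j$ contains the jump of $\hat t_k$ at $x_j$, which vanishes since $\hat t_k$ is continuous there; hence $M_{jk}=\int \hat t_j'\,\hat t_k'\,dx$ for $j\ne k$. Thus $M=D+G$ with $D=\mathrm{diag}(\beta_1,\dots,\beta_N)$ negative definite and $G_{jk}=\int\hat t_j'\hat t_k'\,dx$ a Gram matrix, so $G\ge0$ and $\|G\|\le\operatorname{tr}G=\sum_k\bigl(\tfrac23\varepsilon_k+\tfrac{2}{3r_k}(\beta_k+\varepsilon_k)^2\bigr)$.

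Next I choose the parameters so that $G$ is negligible. Letting $\varepsilon_k\to0$ and $r_k\to\infty$ drives $\operatorname{tr}G\to0$, hence $\|G\|\to0$, while $D$ stays fixed and negative definite; so for small enough $\varepsilon_k$ and large enough $r_k$ the matrix $M$ is negative definite. (Equivalently, one may stagger the descent intervals $[x_k+l_k,\,x_k+l_k+2r_k]$, which all lie to the right of $\sup\Gamma$, to be pairwise disjoint, so that the supports of the derivatives $\hat t_k'$ are disjoint and $G$ becomes exactly diagonal.) A negative definite $M$ gives $(A_{\Gamma,\delta'}u,u)<0$ for every nonzero $u\in\mathcal{L}$ and also forces $\hat t_1,\dots,\hat t_N$ to be linearly independent, so $\dim\mathcal{L}=N$. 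The min-max principle, together with $\sigma_{\mathrm{ess}}\subset[0,\infty)$, then yields at least $N$ negative eigenvalues.

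The main obstacle is that these test functions cannot be localized to small disjoint intervals: the formula in Proposition~3(4) shows the negative contribution $\beta_k$ is overwhelmed by $\tfrac{2}{3r_k}(\beta_k+\varepsilon_k)^2$ unless $r_k$ is large, and the plateau must extend past all of $\Gamma$, so the supports of the $\hat t_k$ necessarily overlap and the $\delta'$-interaction is genuinely nonlocal. The crux of the argument is therefore the control of the off-diagonal couplings $\int\hat t_j'\hat t_k'\,dx$, accomplished either by the trace bound $\|G\|\le\operatorname{tr}G\to0$ or by the disjoint-descent construction that makes $G$ diagonal.
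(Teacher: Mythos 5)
Your proposal is correct and follows essentially the same route as the paper: the same test functions of Definition~4/Proposition~3, the same $n$-dimensional span, and the same min-max conclusion; the paper simply makes the form matrix exactly diagonal by choosing the descent intervals $(l_k, l_k+2r_k)$ pairwise disjoint (your parenthetical alternative) and fixing $\varepsilon_k, r_k$ so that each diagonal entry equals $\tfrac12\beta_k<0$. Your primary variant---allowing overlapping descents and absorbing the couplings into a positive semidefinite Gram matrix $G$ with $\|G\|\le\operatorname{tr}G\to0$---is a harmless reformulation of the same estimate (just note, at the point where you invoke Green's formula, that the boundary term at $x_k$ in $(A\hat t_j,\hat t_k)$ vanishes because $\hat t_j'(x_k)=0$, not only the term at $x_j$).
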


\begin{proof}

   Let~$x_1,...,x_n$ be isolated points of the
  set~$\Gamma$ having negative values $\beta_k < 0$ of intensities
  of $\delta'$-interactions in the points~$x_k$,
  $k=1,...,n$. Let~$\varepsilon_0 >0$ be a sufficiently small number
  such that the $2 \varepsilon_0$-neighborhood of each point~$x_k \in
  \Gamma$ contains no points of the set~$\Gamma$ other
  than~$x_k$. Let~${\mathcal L}_n$ be an $n$-dimensional subspace
  of~${\mathfrak D}(A)$ containing the
  test-functions~$\hat{t}(x)=t(x-x_k;\beta_k, \varepsilon_k,l_k,r_k)$,
  $k=1,...,n$, corresponding to the points~$x_k$ with the
  intensities~$\beta_k < 0$. Choose numbers~$\varepsilon_k \leq
  \varepsilon_0$ and~$r_k$ such that
  \begin{equation}\label{eq:28}
    \beta_k+\frac{2}{3}\varepsilon_k
    +\frac{2}{3r_k}(\beta_k+\varepsilon_k)^2=\frac{1}{2}\beta_k < 0.
  \end{equation}
  Moreover, choose all~$l_k \geq l$, where~$l$ is larger than the diameter
of $\Gamma$, and such that the
  intervals~$I_k=(l_k,l_k+2r_k)$, $k=1,...,n$, do not intersect for
  distinct~$k$. Hence, every function~$u \in {\mathcal L}_n$ can be
  represented as
  \begin{equation}\label{eq:29}
    u(x)=\sum\limits_{k=1}^{n}\, a_k t_k(x-x_k;\beta_k,
    \varepsilon_k,l_k,r_k) ,
  \end{equation}
  where~$a_k$ are complex constants. Using properties of test
  functions and~(\ref{eq:28}) it is easy to see that the quadratic
  form~$(Au,u)$ is negative definite on the $n$-dimensional
  subspace~${\mathcal L}_n$,
i.e. for $u\in {\cal L}_n \setminus \{ 0 \}$  we have
  \begin{equation}\label{neq:}
    (Au,u)=\sum\limits_{k=1}^{n}\,|
    a_k|^2(At_k,t_k)=\frac{1}{2}\sum\limits_{k=1}^{n}\,\beta_k| a_k|^2
    < 0.
  \end{equation}
  Hence, it follows from the variational minimax principle~[26] that the
  operator~$A$ has at least~$n$ negative eigenvalues.
\end{proof}

\section{Boundary conditions for $\delta'$-interactions}\label{S:6}

If~$\Gamma=X$ is a finite or countable set of
points,~$X=\{x_k\}_{k=1}^{\infty}$, then the Schr\"odinger
operator~$L_{X, \beta}$ with $\delta'$-interaction in the
points~$x_k \in X$ with intensities~$\beta_{k}$ is defined on
functions that belong to the space~$W_2^2(R^1 \setminus X)$ and
satisfy the boundary conditions~(12) in every point~$x=x_k$.

Let~$\Gamma$ be a closed bounded subset of~$R^1$ of measure
zero,~$|\Gamma|=0$. The Schr\"odinger operator with
$\delta'$-interaction on~$\Gamma$ is defined in an abstract form
in Section~3 (see Definition~3). We will give a concrete
construction of such operators following~[8, 24].

Let~$\Gamma$ be endowed with a Radon measure, that is, a finite
regular Borel measure~$\mu$ [30] such that its support coincides
with~$\Gamma$. In this case, one can define boundary data
on~$\Gamma$ for some functions~$\psi \in W_2^2(R^1 \setminus
\Gamma)$, which is an analogue of~$\psi_{s}(x_{0})$,
$\psi'_{s}(x_{0})$, $\psi_{r}(x_{0})$, $\psi'_{r}(x_{0})$ given
in~(\ref{eq:13}).

Let a function~$\psi(x)$ and its derivative~$\psi'(x)$ have the
following representations for~$x,s \in R^1\setminus \Gamma$:
\begin{equation}\label{eq:**}
  \begin{array}{l}
    \psi(x)=\psi(s)+\int_s^{x}\,\psi'(\xi)\,d\xi+\int_{(s,x)}\,f(\xi)\mu(d\xi),\\[2mm]
    \psi'(x)=\psi'(s)+\int_s^{x}\,\psi''(\xi)\,d\xi+\int_{(s,x)}\,g(\xi)\mu(d\xi),
  \end{array}
\end{equation}
where~$f$ and~$g$ are defined on~$\Gamma$ and absolutely
integrable with respect to the measure~$\mu$. The functions~$f$
and~$g$ are called derivatives of the functions~$\psi(x)$
and~$\psi'(x)$ with respect to the measure~$\mu$, and are denoted
by~$f=\frac{d\psi}{d\mu}$, $g=\frac{d\psi'}{d\mu}$. They are
analogues of the jump functions~$\psi_{s}(x_{0})$
and~$\psi'_{s}(x_{0})$. It follows from~(\ref{eq:**}) that there
exist functions~$\psi_r(x)=\frac{1}{2}[\psi(x+0)+\psi(x-0)]$
and~$\psi'_r(x)=\frac{1}{2}[\psi'(x+0)+\psi'(x-0)]$ on~$\Gamma$
that are essentially bounded on~$\Gamma$, i.e., belong to the
space~$L_{\infty}(\Gamma, d\mu)$. All four functions~$\psi_r$,
$\psi'_r$, $\frac{d\psi}{d\mu}$, and~$\frac{d\psi'}{d\mu}$ define
boundary data on~$\Gamma$ for functions~$\psi$ that admit
representation~(\ref{eq:**}). The set of all functions in the
space~$W_2^2(R^1\setminus \Gamma)$ satisfying boundary conditions
will be denoted by~$W_2^2(R^1\setminus \Gamma; d\mu)$. For
functions~$\psi,\,\varphi \in W_2^2(R^1\setminus \Gamma; d\mu)$,
it was proved in~[8] that Green's first and second formulas hold
with boundary values of~$\psi$ and~$\varphi$ on~$\Gamma$.

Green's first formula is
\begin{equation}\label{eq:31a}
  (-\psi'', \varphi)_{L_{2}(R^1)} = (\psi',
  \varphi')_{L_{2}((R^1)} +\int\limits_{\Gamma}\,\Bigr[
  \frac{d\psi'}{d\mu}\overline{\varphi_r} +
  \psi'_r\frac{d\overline{\varphi}}{d\mu} \Bigl]d\mu.
\end{equation}
 Green's second formula is
\begin{multline}
\label{eq:32}
    (-\psi'', \varphi)_{L_{2}(R^1)} -  (\psi, -
    \varphi'')_{L_{2}((R^1)} =\int\limits_{\Gamma}\,\Bigr[
    \frac{d\psi'}{d\mu}\overline{\varphi}_r +
    \psi'_r\frac{\overline{d\varphi}}{d\mu}-\psi_r\frac{\overline{d\varphi'}}{d\mu}-\frac{d\psi}{d\mu}\overline{\varphi'_r}
    \Bigl]d\mu
    \\[2mm]
    =\omega(\Gamma\psi,\Gamma\varphi)=<\hat{\Gamma_{1}}\psi,\hat{\Gamma_{2}}\varphi>-<\hat{\Gamma_{2}}\psi,\hat{\Gamma_{1}}\varphi>,\,
 \\   \hat{\Gamma_{1}}\psi
    =\col(\frac{d\psi'}{d\mu},\frac{d\psi}{d\mu}),\qquad
   \hat{\Gamma}_2=\col(\psi_r,-\psi'_r).
\end{multline}

Green's second formula allows to consider different self-adjoint
boundary conditions that are similar to one-point conditions
considered in Section~2. They include the following boundary
conditions that correspond to $\delta'$-interaction on~$\Gamma$:
\begin{equation}\label{eq:33}
\frac{d\psi'(x)}{d\mu}=0,\,\,\frac{d\psi(x)}{d\mu}=\beta(x)\psi'_{r}(x),\,\,x\in
\Gamma.
\end{equation}
Here, the real-valued function~$\beta$ is defined on~$\Gamma$ and
is absolutely integrable with respect to measure~$\mu$. The
function~$\beta$ defines the intensity of the $\delta'$-interaction
on~$\Gamma$.

\section{Spectral properties of Schr\"odinger operator \\with
  $\delta'$-interaction}\label{S:7}

The boundary conditions~(\ref{eq:33}) define a Schr\"odinger operator
with $\delta'$-interaction on~$\Gamma$. The definition domain of
such an operator~$L_{\Gamma, \beta}$ consists of all functions in
the space~$W_2^2(R^1\setminus \Gamma; d\mu)$ that satisfy the
boundary conditions~(\ref{eq:33}). The operator acts on such a
function~$\psi$ by~$L_{\Gamma, \beta}\psi=-\psi''(x)$,
$x\not\in \Gamma$. This operator is Hermitian in virtue of
Green's second formula~(\ref{eq:32}). It was proved in~[8] that it
is self-adjoint. This is the following result.

\begin{theorem}\label{Th:2}
  Let~$\Gamma$ be a bounded closed subset of the real line, having
  Lebesgue measure zero. Let a real-valued function~$\beta$ be
  absolutely integrable on~$\Gamma$ with respect to a Radon
  measure~$\mu$. The  Schr\"odinger operator~$L_{\Gamma, \beta}$ is
  self-adjoint on the space~$L_{2}(R^1)$ and defines a
  $\delta'$-interaction on~$\Gamma$.
The negative spectrum of
  the operator~$L_{\Gamma, \beta}$ is discrete.
\end{theorem}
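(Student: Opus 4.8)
The plan is to treat the three assertions---symmetry, the fact that $L_{\Gamma,\beta}$ realizes a $\delta'$-interaction, and discreteness of the negative spectrum---in turn, the last being the genuinely new point.

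First, symmetry of $L_{\Gamma,\beta}$ on its domain is immediate from Green's second formula~\eqref{eq:32}: substituting the boundary conditions~\eqref{eq:33}, namely $\frac{d\psi'}{d\mu}=\frac{d\varphi'}{d\mu}=0$ together with $\frac{d\psi}{d\mu}=\beta\psi'_r$ and $\frac{d\varphi}{d\mu}=\beta\varphi'_r$ (with $\beta$ real), makes the boundary integral on the right collapse to $\int_\Gamma(\beta\,\psi'_r\overline{\varphi'_r}-\beta\,\psi'_r\overline{\varphi'_r})\,d\mu=0$, so $(L_{\Gamma,\beta}\psi,\varphi)=(\psi,L_{\Gamma,\beta}\varphi)$. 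That this symmetric operator is in fact self-adjoint I would take from~[8]. To see that it describes a $\delta'$-interaction in the sense of Definition~3 I would verify the two defining properties directly. A function $\chi\in C_0^\infty(R^1)$ with $\chi'\in C_0^\infty(R^1\setminus\Gamma)$ is locally constant near $\Gamma$, whence $\frac{d\chi}{d\mu}=\frac{d\chi'}{d\mu}=0$ and $\chi'_r\equiv0$ on $\Gamma$; thus $\chi$ satisfies~\eqref{eq:33} and lies in $\mathfrak D(L_{\Gamma,\beta})$. For locality, if $u\in\mathfrak D(L_{\Gamma,\beta})$ and $\chi$ is such a cutting function, then near each point of $\Gamma$ one has $\chi\equiv c$ for a constant $c$, so $(\chi u)'=\chi u'$ there and $\frac{d(\chi u)'}{d\mu}=c\,\frac{du'}{d\mu}=0$, $\frac{d(\chi u)}{d\mu}=c\,\beta\,u'_r=\beta\,(\chi u)'_r$; hence $\chi u\in\mathfrak D(L_{\Gamma,\beta})$ and the boundary conditions are local and, being of $\delta'$-type, non-splitting.

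For the negative spectrum it suffices to prove $\sigma_{\mathrm{ess}}(L_{\Gamma,\beta})\subseteq[0,\infty)$, since any spectrum lying strictly below $\inf\sigma_{\mathrm{ess}}$ automatically consists of isolated eigenvalues of finite multiplicity. Here I would exploit boundedness of $\Gamma$: choosing $R$ with $\Gamma\subset(-R,R)$, the operator coincides with the free Laplacian for $|x|>R$. Imposing additional Dirichlet conditions at $\pm R$ produces a self-adjoint operator $\tilde L=L_-\oplus L_0\oplus L_+$ that differs from $L_{\Gamma,\beta}$ by a finite-rank (Krein) perturbation in resolvent sense, so by Weyl's theorem $\sigma_{\mathrm{ess}}(L_{\Gamma,\beta})=\sigma_{\mathrm{ess}}(\tilde L)$. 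The half-line pieces $L_\pm$ are nonnegative free Dirichlet Laplacians with $\sigma_{\mathrm{ess}}=[0,\infty)$, so everything reduces to showing $\sigma_{\mathrm{ess}}(L_0)\cap(-\infty,0)=\emptyset$ for the interior operator $L_0$ on $L_2(-R,R)$.

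The main obstacle is exactly this last step, because---as Theorem~3 shows---$L_0$ may have infinitely many negative eigenvalues and hence cannot have compact resolvent; I only need those eigenvalues to accumulate at $0$ and not at some negative value. I would establish this by a Birman--Schwinger argument: for each $\kappa>0$ the number of eigenvalues of $L_0$ below $-\kappa^2$ equals the number of eigenvalues $\ge1$ of an operator $K(\kappa)$ on $L_2(\Gamma,d\mu)$ built from $\beta$, the trace $\psi\mapsto\psi'_r$, and the free resolvent at energy $-\kappa^2$, the governing form $\|\psi'\|^2+\int_\Gamma\beta|\psi'_r|^2\,d\mu$ being read off from Green's first formula~\eqref{eq:31a}. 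If $K(\kappa)$ is compact for each fixed $\kappa>0$, then this count is finite for every $\kappa$, which simultaneously bounds $L_0$ from below and forces the negative eigenvalues to accumulate only at $0$, i.e. $\sigma_{\mathrm{ess}}(L_0)\subseteq[0,\infty)$. The delicate point in the compactness of $K(\kappa)$ is the diagonal contribution to $\partial_x\partial_y$ of the Green function, which is singular and is precisely the feature distinguishing $\delta'$-interactions; I expect to control it using the finiteness and regularity of the Radon measure $\mu$ together with a trace estimate on the bounded interval, boundedness of $\Gamma$ guaranteeing that the off-diagonal part has a bounded, hence Hilbert--Schmidt, kernel with respect to $\mu\otimes\mu$. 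Combining the three parts gives $\sigma_{\mathrm{ess}}(L_{\Gamma,\beta})=[0,\infty)$ and therefore discreteness of the negative spectrum.
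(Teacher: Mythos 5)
Your treatment of symmetry (via Green's second formula) and of the two properties in Definition~3 is correct and essentially the paper's own argument, as is your reduction: decouple at two points outside $\Gamma$, observe that this is a finite-rank resolvent perturbation, invoke Weyl's theorem, and note the half-line pieces are nonnegative, so that everything hinges on the interior operator $L_0$ on the bounded interval. The genuine gap is in your handling of $L_0$. Your premise that $L_0$ ``may have infinitely many negative eigenvalues and hence cannot have compact resolvent'' is a non sequitur: infinitely many negative eigenvalues tending to $-\infty$ are perfectly compatible with a compact resolvent (the spectrum is then purely discrete, accumulating only at $\pm\infty$). In fact the paper proves exactly the statement you declare impossible: imposing a Dirichlet condition at $a$ and a Neumann condition at $b$, the boundary conditions $\frac{d\psi'}{d\mu}=0$ and $\psi'(b)=0$ permit an explicit inversion, $\psi(x)=\int_a^b \mathcal{G}(x,s)\,h(s)\,ds$ with the bounded kernel $\mathcal{G}(x,s)=\min(x,s)-a+\int_a^{\min(x,s)}\beta(\xi)\,d\mu(\xi)$, so $[L_{\Gamma,\beta}^{(a,b)}]^{-1}$ is a bounded (indeed Hilbert--Schmidt) Hermitian integral operator, hence compact, and the interior spectrum is discrete with only $\lambda=\infty$ as a limit point in absolute value. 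That observation, combined with the gluing you already have, finishes the proof.

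Worse, your proposed substitute would prove too much. A Birman--Schwinger argument with compact $K(\kappa)$ would yield finitely many eigenvalues of $L_0$ below $-\kappa^2$ for every $\kappa>0$, hence (as you yourself conclude) semiboundedness from below and accumulation of negative eigenvalues only at $0$. This contradicts Theorem~3 of the same paper: for suitable $\beta$ the operator $L_{\Gamma,\beta}$ --- and hence $L_0$, which differs from it by a finite-rank resolvent perturbation --- has infinitely many negative eigenvalues with $\lambda_n\to-\infty$, so for each fixed $\kappa>0$ \emph{infinitely} many eigenvalues lie below $-\kappa^2$. Consequently the operator $K(\kappa)$ you hope to construct cannot be compact for such $\beta$; the singular diagonal contribution of the $\delta'$ trace, which you flag as ``delicate'', is genuinely non-compact, and your scheme cannot close. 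The correct picture is not ``negative eigenvalues accumulate only at $0$'' but ``negative eigenvalues accumulate only at $-\infty$'', and the route to it is compactness of the explicit inverse of the interior operator, not a Birman--Schwinger count.
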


\begin{proof}
  Since we work here with the abstract Definition~\ref{D:3} of a
  Schr\"odinger operator with $\delta'$-interaction on~$\Gamma$, the
  proof from~[8] needs to be modified in view of this definition.
  The Schr\"odinger operator~$L_{\Gamma, \beta}$ is self-adjoint. This
  is proved in~[8] for~$\Gamma$ being a Cantor set and a
  Hausdorff measure on~$\Gamma$.
This proof is correct for the general case of
$\delta'$--interaction on a set $\Gamma$ with a measure $\mu$. Let
us consider an operator $L_{\Gamma, \beta}^{(a,b)}$ in a space
$L_{2}(a, b),$ where the interval $(a, b)$  contains the set
$\Gamma$. The domain of operator $L_{\Gamma, \beta}^{(a,b)}$
consists of the restrictions on the interval $(a, b)$ of all functions
of $W_2^2(R^1\setminus \Gamma, d\mu ),$ that satisfy boundary
conditions~(\ref{eq:33}) and also boundary conditions
$\psi(a)=0,\,\, \psi'(b)=0$ at the endpoints of interval. The
action of  the operator  $L_{\Gamma, \beta}^{(a,b)}$ on these
functions $\psi$ leads to $-\psi''(x)$ with $x\not\in
\Gamma$. Let us show that the operator $L_{\Gamma, \beta}^{(a,b)}$
is self--adjoint in the space $L_{2}(a, b).$ For this, at the
beginning, let us show that the range of values of operator
$L_{\Gamma, \beta}^{(a,b)}$ is the whole space $L_{2}(a, b).$ In
fact, since $\frac{d\psi'(x)}{d\mu}=0,$  then because
of~(\ref{eq:**}) and the boundary condition $\psi'(b)=0$:
$\psi'(x)=\int\limits_x^b\,h(s)\,ds$, where $h(x)=L_{\Gamma,
\beta}^{(a,b)}\psi(x)=-\psi''(x).$  Therefore, for any $h \in
L_{2}(a, b)$ we have $\psi'(x)=\psi_r'(x),$ and
considering~(\ref{eq:**}), boundary conditions~(\ref{eq:33}) and
condition $\psi(a)=0$ with $x\not\in  \Gamma,$ we have
\begin{equation}\label{eq:35**}
\psi(x)=\int\limits_a^x \,\psi'(s)\,ds + \int\limits_a^x
\beta(s)\psi'(s)\,d\mu(s)=\int\limits_a^b\,\mathcal{G}(x,s)h(s)\,ds
\end{equation}
where $\mathcal{G}(x,s)=\min(x,s)-a  +
\int\limits_a^{\min(x,s)}\,\beta(\xi)d\mu(\xi).$ The
representation~(\ref{eq:35**}) shows that the operator $[L_{\Gamma,
\beta}^{(a,b)}]^{-1}$ is an integral bounded Hermitian operator in
the space $L_{2}(a, b).$
 Therefore, the operator $L_{\Gamma, \beta}^{(a,b)}$   is self--adjoint in
 the space $L_{2}(a,b).$

Let us consider the direct sum of self--adjoint operators $L_{D},$\,
$L_{\Gamma, \beta}^{(a,b)},$ \,$L_{N}:$\, $L=L_{D}\oplus
L_{\Gamma, \beta}^{(a,b)}\oplus L_{N}$ in the space
$L_{2}(R^1)=L_{2}(-\infty, a) \oplus L_{2}(a, b) \oplus L_{2}( b,
\infty).$ Here, the self--adjoint operator $L_{D}$ is defined in the
space  $L_{2}(-\infty, a)$ by the differential expression
 $-\frac{d^2}{dx^2}$ on functions of the space $W_2^2(-\infty,
 a),$ that satisfy Dirichlet boundary condition $\psi(a)=0$.
 The self--adjoint operator $L_{N}$ is defined in the
space  $L_{2}( b, \infty)$  by differential expression
 $-\frac{d^2}{dx^2}$ and Neumann boundary condition  $\psi'(b)=0$.
  The self--adjoint operator $L_{\Gamma, \beta}^{(a,b)}$   is defined above in
  the space $L_{2}(a, b)$.  It is easy to see, that the symmetric
operator $L_{\Gamma, \beta}$  is a finite rank perturbation of the
 self--adjoint operator $L$ in the space
 $L_{2}(R^1)$ and corresponds to self--adjoint boundary conditions
 $\psi(a-0)=\psi(a+0),$\, $\psi'(a-0)=\psi'(a+0),$\,$\psi(b-0)=\psi(b+0),
$\,$\psi'(b-0)=\psi'(b+0).$
 Therefore,  [20] the operator $L_{\Gamma, \beta}$ is self--adjoint
 in the space $L_{2}(R^1).$
Since  the operator $[L_{\Gamma,\beta}^{(a,b)}]^{-1}$ is compact
and the operators $L_D$ and $L_N$ have absolutely continuous
spectrum $[0, +\infty)$ and the spectrum of the operator
$L_{\Gamma,\beta}^{(a,b)}$ is discrete with only possible limit
point $\lambda=\infty $ then the spectrum of the operator
$L=L_{D}\oplus L_{\Gamma, \beta}^{(a,b)}\oplus L_{N}$ and
consequently the  spectrum of  the operator $L_{\Gamma,\beta}$ can be
only discrete on the negative half-axis since the self--adjoint
operator $L_{\Gamma,\beta}$ is a finite rank perturbation of the
operator $L$.

On the other hand, the
  domain~$ D(L_{\Gamma, \beta})$ possesses the properties required in
  Definition~\ref{D:3}. Indeed, it follows from
  representation~(\ref{eq:**}) that if a function~$\psi(x)$ has
  boundary values on~$\Gamma$, then the same is true for the
  function~$\chi(x)\cdot\psi(x)$. The boundary data for the
  function~$\chi(x)\cdot\psi(x)$ coincide with the boundary data for
  the function~$\psi(x)$ multiplied by the function~$\chi$, that is,
  $\frac{d (\chi\psi)}{d \mu}= \chi \frac{d \psi}{d \mu}$, which means
  that~$(\chi\psi)_{r}=\chi\psi_{r}$, etc. This shows that, if~$\psi
  \in D(L_{\Gamma, \beta})$, then~$\chi\psi \in D(L_{\Gamma, \beta})$.

  Hence, the self-adjoint operator~$L_{\Gamma, \beta}$ describes a
  local interaction on~$\Gamma$ according to
  Definition~\ref{D:1}. Since the function~$\chi$ has trivial boundary
  data and~$\frac{d \chi}{d \mu}= 0$, $\frac{d \chi'}{d \mu}= 0$,
  $\chi'_{r}=0$, $\chi_{r} =\chi$, it follows that this function
  satisfies boundary conditions~(\ref{eq:**}) and, consequently, $\chi
  \in D(L_{\Gamma, \beta})$. It now follows from Definition~\ref{D:3}
  that the self-adjoint operator~$L_{\Gamma, \beta}$ defines a
  $\delta'$-interaction on the set~$\Gamma$.
\end{proof}

In order to extend the results of Theorem~\ref{Th:1} to a general
case, we will need the following definition.

\begin{definition}\label{D:5}
  We say that a real-valued function~$\beta$ defined on a set~$\Gamma$
  with a measure~$\mu$ assumes negative values on an infinite number
  of subsets of~$\Gamma$ if for any natural~$N$ there
  exists~$\varepsilon > 0$ and a collection of closed measurable
  nonintersecting subsets~$\Gamma_{k} \subset\Gamma$, $\mu
  (\Gamma_{k})> 0$, $k=1,...,N$, such that the function~$\beta(x)$
  assumes strictly negative values on~$\Gamma_{k} $, $\beta(x)\leq
  -\varepsilon$, $ x\in \Gamma_{k}$, $k=1,...,N$.
\end{definition}

\begin{theorem}\label{Th:3}
  Let a real-valued function~$\beta$, defined on a closed bounded
  set~$\Gamma$ of Lebesgue measure zero, be absolutely integrable with
  respect to a Radon measure~$\mu$ and assume negative values on an
  infinite number of subsets of~$\Gamma$. Then the Schr\"odinger
  operator~$L_{\Gamma, \beta}$ with $\delta'$-interaction on~$\Gamma$,
  having intensity~$\beta$, is a self-adjoint operator on the
  space~$L_{2}(R^1)$ and has an infinite number of negative eigenvalues,~$\lambda_{n}\rightarrow -\infty$.
\end{theorem}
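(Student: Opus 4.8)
The plan is to mimic the variational argument from Theorem~1, replacing the finitely many test functions supported near isolated points with test functions supported on the subsets $\Gamma_k$ on which $\beta$ is strictly negative. First, the self-adjointness of $L_{\Gamma,\beta}$ and the discreteness of its negative spectrum follow directly from Theorem~2, so the entire content of the argument is to exhibit, for each natural number $N$, an $N$-dimensional subspace of $\mathfrak{D}(L_{\Gamma,\beta})$ on which the quadratic form $(L_{\Gamma,\beta}\,u,u)$ is negative definite. By the min-max principle this forces at least $N$ negative eigenvalues, and since $N$ is arbitrary the negative spectrum is infinite; because that spectrum is discrete, the eigenvalues must accumulate only at $+\infty$ and hence there is a sequence $\lambda_n\to-\infty$.

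Given $N$, I would invoke Definition~5 to obtain $\varepsilon>0$ and pairwise disjoint closed subsets $\Gamma_1,\dots,\Gamma_N\subset\Gamma$ with $\mu(\Gamma_k)>0$ and $\beta(x)\le-\varepsilon$ on $\Gamma_k$. The next step is to construct, for each $k$, a real function $u_k\in W_2^2(R^1\setminus\Gamma;d\mu)$ that satisfies the boundary conditions~(\ref{eq:33}) and makes $(L_{\Gamma,\beta}u_k,u_k)<0$, with the supports of the $u_k$ arranged to be disjoint so that $u_1,\dots,u_N$ are linearly independent and the form is diagonal on their span. Using Green's first formula~(\ref{eq:31a}) together with the boundary condition $\frac{d\psi'}{d\mu}=0$, $\frac{d\psi}{d\mu}=\beta\,\psi'_r$, the quadratic form evaluates to
\begin{equation}\label{eq:qform}
  (L_{\Gamma,\beta}u_k,u_k)=\int_{R^1}|u_k'(x)|^2\,dx+\int_{\Gamma}\beta(x)\,|u'_{k,r}(x)|^2\,d\mu(x).
\end{equation}
Thus the goal is to choose $u_k$ whose kinetic energy $\int|u_k'|^2$ is small while the boundary term $\int_{\Gamma_k}\beta\,|u'_{k,r}|^2\,d\mu\le-\varepsilon\int_{\Gamma_k}|u'_{k,r}|^2\,d\mu$ is comfortably negative; concentrating $u_k$ near $\Gamma_k$ and flattening its slope away from $\Gamma_k$ makes the first integral as small as we wish relative to the second.

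The main obstacle is this construction of the individual test functions: unlike in Theorem~1, the set $\Gamma_k$ is not a single isolated point but a positive-$\mu$-measure piece of a possibly complicated set, so the explicit piecewise-parabolic test function of Definition~4 does not transfer verbatim. I would therefore work directly from the measure-derivative representation~(\ref{eq:**}): prescribe $u_{k,r}'$ to be a fixed nonzero constant $c$ on $\Gamma_k$ (and taper it to zero outside a small neighborhood of $\Gamma_k$), then recover $u_k'$ and $u_k$ by integrating~(\ref{eq:**}), automatically enforcing $\frac{d u_k'}{d\mu}=0$ and $\frac{d u_k}{d\mu}=\beta u_{k,r}'$ as required by~(\ref{eq:33}). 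With this choice the boundary term in~(\ref{eq:qform}) is at most $-\varepsilon\,c^2\,\mu(\Gamma_k)<0$, a fixed negative quantity, while the kinetic term can be driven below $\tfrac12\varepsilon c^2\mu(\Gamma_k)$ by localizing the taper to a sufficiently thin region; the freedom to place these thin regions around the disjoint $\Gamma_k$ in non-overlapping intervals secures both the disjoint-support condition and the strict negative-definiteness of the form on $\mathrm{span}\{u_1,\dots,u_N\}$. Verifying that such $u_k$ genuinely lie in $W_2^2(R^1\setminus\Gamma;d\mu)$ and have well-defined boundary data on all of $\Gamma$ (not just on $\Gamma_k$) is the delicate point, but it follows from~(\ref{eq:**}) once $u_{k,r}'$ is taken bounded and supported away from the other structure of $\Gamma$.
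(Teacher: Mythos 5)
Your strategy is the same as the paper's: quote Theorem~\ref{Th:2} for self-adjointness and discreteness of the negative spectrum, build for each $N$ a test function $u_k$ for each $\Gamma_k$ by prescribing $u'_{k,r}$ to be constant on $\Gamma_k$ and tapering to zero off a $\delta$-neighborhood, integrate via (\ref{eq:**}) so that (\ref{eq:33}) holds, compute the quadratic form by Green's first formula (your display is exactly the paper's (\ref{eq:42})), and finish with the min-max principle. The genuine gap is in the key estimate. You assert that the boundary term satisfies $\int_\Gamma\beta|u'_{k,r}|^2\,d\mu\le-\varepsilon c^2\mu(\Gamma_k)$, and you justify it by taking $u'_{k,r}$ ``supported away from the other structure of $\Gamma$''. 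That is impossible in general: Definition~\ref{D:5} supplies only \emph{closed} sets $\Gamma_k$ of positive $\mu$-measure, not sets relatively open in $\Gamma$, so $\Gamma\setminus\Gamma_k$ may accumulate on $\Gamma_k$ (take $\Gamma$ a Cantor set and $\Gamma_k$ a closed subset of positive $\mu$-measure with empty interior relative to $\Gamma$). Then the taper region $\mathcal{U}_\delta(\Gamma_k)$, however thin, meets $\Gamma\setminus\Gamma_k$, where $\beta$ is allowed to be positive and even unbounded, and the boundary integral acquires an uncontrolled positive contribution $\int_{(\mathcal{U}_\delta(\Gamma_k)\cap\Gamma)\setminus\Gamma_k}\beta|u'_{k,r}|^2\,d\mu$. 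The missing idea --- and the place where the hypothesis $\beta\in L_1(\Gamma,d\mu)$, which you never use, does its work --- is absolute continuity of the integral: since $\mu\bigl((\mathcal{U}_\delta(\Gamma_k)\cap\Gamma)\setminus\Gamma_k\bigr)\to 0$ as $\delta\downarrow 0$, this spurious term can be made smaller than $\tfrac12\varepsilon c^2\mu(\Gamma_k)$; this is precisely the paper's estimate (\ref{eq:8*}).

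A second, smaller defect: the function recovered by integrating (\ref{eq:**}) equals a nonzero constant $c_k$ to the right of $\Gamma$, hence is not in $L_2(R^1)$; it must be brought back to zero, and the kinetic cost of doing so over an interval of length $2r_k$ is of order $c_k^2/r_k$, so that interval must be taken \emph{long}, not thin. Your prescription ``localizing the taper to a sufficiently thin region'' to suppress the kinetic term therefore points in the wrong direction for this piece; thinness helps only near $\Gamma_k$, where together with $|\Gamma|=0$ it yields the paper's (\ref{eq:9*}). The paper carries out the return to zero with two parabolas on $[l_k,l_k+2r_k]$, $r_k$ large, via estimates (\ref{eq:10*}) and (\ref{eq:11*}); note that bounding $|c_k|$ in (\ref{eq:10*}) again uses $\|\beta\|_{L_1(\Gamma,d\mu)}$. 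With these two repairs your construction coincides with the paper's proof.
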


\begin{proof}
  The proof is similar to the proof of Theorem~\ref{Th:1}.
Let the
  conditions of the theorem be satisfied. Then, by Theorem~2, the
  operator~$L_{\Gamma, \beta}$ is self-adjoint on~$L_{2}(R^1)$
  and the negative spectrum of
  the operator~$L_{\Gamma, \beta}$ is discrete.
  Let us
  show that the operator~$L_{\Gamma, \beta}$ has an infinite number of
  negative eigenvalues. To this end, it is sufficient to show that
  there exists an

$N$--dimensional subspace ${\cal L}_N$
of the domain of $L_{\gamma,\beta}$ such that
  $(L_{\Gamma,\beta} u,u )<0, $  for any $u \in
\mathcal{L}_N,$\,\,$u\neq 0$
  for any natural~$N$. Fix~$N$. By the conditions of the theorem
there
  exist~$N$ nonintersecting closed subsets~$\Gamma_{k} \subset\Gamma$,
  $\mu (\Gamma_{k})> 0$, and~$\varepsilon> 0$ such that~$\beta(x)\leq
  -\varepsilon$ for~$x\in \Gamma_{k}$, $k=1,...,N$. Consider analogues
  of the test functions of Section~\ref{S:4}. Since the number of
  subsets~$\Gamma_{k}$ is finite, they are closed and nonintersecting,
  there is~$\delta>0$ such that all $\delta$-neighborhoods
  $\mathcal{U}_{\delta}(\Gamma_{k})=\{y: |y-x|< \delta, \,x\in
  \Gamma_{k}\}$ of the sets~$\Gamma_{k}$ are also pairwise
  nonintersecting. Let us construct a test function for each
  set~$\Gamma_{k}$ as follows. Consider the function~$\chi_{k}(x)\in
  C_0^{\infty}(R^1)$ that equals~$1$ on~$\Gamma_{k}$, takes values
  between $0$ and $1$, and equals to zero outside
  of~$\mathcal{U}_{\delta}(\Gamma_{k})$. Such a step function can be
  constructed as usual by making a smooth function from the
  characteristic function of the
  set~$\mathcal{U}_{\frac{\delta}{2}}(\Gamma_{k})$. As a candidate for
  the test function, we take
  \begin{equation}\label{eq:5*}
    \hat{t}_k(x;\beta,\Gamma_{k},\delta)=\int\limits_{a}^{x}\,\chi_{k}(\xi)\,d\xi+\int\limits_{(a,x)}\,\beta(\xi)\chi_{k}(\xi)\,d\mu(\xi),
  \end{equation}
  where the number~$a$ is chosen so that all bounded
  sets~$\mathcal{U}_{\delta}(\Gamma_{k})$, $k=1,...,N$, would lie to
  the right of the point~$a$. For~$x$ that lie on the right of the
  set~$\Gamma$, this function takes the constant value~$c_{k}$. While
  the function~$t_k$ does not belong to the space~$L_{2}(R^1)$, we can
  turn it into a function with compact support using two parabolas on
  the interval~$[l, l+2r]$ that lies to the right of~$\Gamma$. We thus
  get the test function
  \begin{equation}\label{eq:35}
    t_{k}(x;\beta,\Gamma_{k},\delta,l,r)=\left\{
      \begin{array}{ll}
        \hat{t},  &  x \leq l,\\[2mm]
        -\frac{c_{k}}{2r^{2}}(l-x)^2 + c_{k}, & l \leq x \leq l+r, \\[2mm]
        \frac{c_{k}}{2r^{2}}(l+2r-x)^2,& l +r  \leq x \leq l+2r , \\[2mm]
        0,  & l+2r < x.
      \end{array}
    \right.
  \end{equation}
  Here, the parameters~$l$ and~$r$ may depend on~$k$.
\begin{proposition}
  The main properties of the test functions~$t_k$~(\ref{eq:35}) are the following:
  \begin{itemize}
  \item [$1^{0}$] $t_k \in \mathcal{D}(L_{\Gamma, \beta})$ if~$\Gamma
    \subset ( -\infty, l)$.
  \item [$2^{0}$] By choosing~$\delta$ sufficiently small and~$r$
    sufficiently large, we have
    \begin{equation}\label{eq:7*}
      (L_{\Gamma, \beta} t_k, t_k)\leq
      -\frac{1}{8}\varepsilon\mu(\Gamma_{k}),
    \end{equation}
    that is, the quadratic form takes negative values.
  \item [$3^{0}$] The quadratic form of the linear
    combination~$t=\sum\limits_{k=1}^{N}\,a_k\cdot t_k$ of test
    functions that satisfy the condition~$1^{0}$, if~$l_k$ and~$r_k$
    are chosen so that the intervals~$[l_k, l_{k}+2r_k]$ are pairwise
    disjoint, takes negative values,
    \begin{equation}\label{eq:7**}
      (L_{\Gamma, \beta} t, t)= \sum\limits_{k=1}^{N}\,|a_k|^2
      (L_{\Gamma, \beta} t_k, t_k) \leq -\frac{1}{8}\varepsilon
      \min\limits_k \mu(\Gamma_{k}) \sum\limits_{k=1}^{N}\,|a_k|^2 <0.
    \end{equation}
  \end{itemize}
  \end{proposition}
  If these three conditions are satisfied, then the proof is
  finished by applying the variational minimax principle~[26] as
  in the proof of Theorem~1.

  Let us now prove that test functions satisfy
  properties~$1^{0}$---$3^{0}$. The first property is clearly
  satisfied by the construction of~$t_k$ and~$\hat{t}_k$
  in~(\ref{eq:5*}) and~(\ref{eq:35}) and the definition of the
  operator~$L_{\Gamma, \beta}$. The second property is most
  important. Since the function~$\beta$ is absolutely integrable
  on~$\Gamma$ with respect to the Radon measure~$\mu$ and~$ 0\leq
  \chi_k \leq 1$, we see that there exists small~$\delta$ such that
  \begin{equation}\label{eq:8*}
    \Bigr|\int\limits_{\mathcal{U}_{\delta}(\Gamma_k)\cap\Gamma}\,\beta(\xi)\chi_{k}(\xi)\,d\mu(\xi)-
    \int\limits_{\Gamma_k}\,\beta(\xi)\,d\mu(\xi)\Bigl | <
    \frac{1}{2}\varepsilon \mu(\Gamma_{k}).
  \end{equation}
  Moreover, since the set~$\Gamma$ has Lebesgue measure zero, there
  exists a small~$\delta$ such that the following estimate holds for
  the Lebesgue measure of the
  set~$\mathcal{U}_{\delta}(\Gamma_{k})$:
  \begin{equation}\label{eq:9*}
    |\mathcal{U}_{\delta}(\Gamma_k)| \leq \frac{1}{4} \varepsilon
    \mu(\Gamma_{k}).
  \end{equation}

  If inequalities~(\ref{eq:8*}) and~(\ref{eq:9*}) hold, then the
  constant~$c_{k}$, which is equal to the value of the
  function~$\hat{t}$ for large~$k$, satisfies the estimate
  \begin{equation}\label{eq:10*}
    |c_k| \leq (\frac{3}{4}\varepsilon + ||\beta||_{L_1(\Gamma,
      d\mu)})  \mu(\Gamma_{k}).
  \end{equation}
  By choosing~$r_{k}$ large enough, we have
  \begin{equation}\label{eq:11*}
    \int\limits_{l_k}^{l_{k}+2r_k}\,|t'(x)|^2\,dx \leq
    \frac{1}{8}\varepsilon \mu(\Gamma_{k}).
  \end{equation}

  In virtue of Green's first formula~(\ref{eq:31a}), since the
  function~$t_k$ satisfies the boundary conditions~(\ref{eq:33}) and
  because~$t'_k(x)=\chi_k(x)$ for~$x\leq l_k$, we have
  \begin{equation}\label{eq:42}
    (L_{\Gamma, \beta} t_k, t_k)= \int\limits_{a}^{l_{k}}\,|\chi_k
    (x)|^2\,dx+
    \int\limits_{l_k}^{l_{k}+2r_k}\,|t'_k(x)|^2\,dx+\int\limits_{\Gamma}\,\beta(\xi)|\chi_{k}(\xi)|^2\,d\mu(\xi).
  \end{equation}
  The first integral~$\mathcal{I}_1$ in~(\ref{eq:42}) can be estimated
  in terms of the Lebesgue measure~$\mathcal{U}_{\delta}(\Gamma_{k})$,
  since values of the function~$\chi_k(x)$ belong to the
  interval~$[0,1]$. The second
  integral~$\mathcal{I}_2=\frac{2}{3}c_k^2 \cdot r_k^{-1}$ can be
  explicitly calculated, since the function~$t_k'(x)$ on the
  interval~$[l_k,l_{k}+2r_k]$ consists of two parabolas
  by~(\ref{eq:35}). The third integral~$\mathcal{I}_3$
  in~(\ref{eq:42}) can be estimated as follows:
  $$
  \mathcal{I}_3=\int\limits_{\Gamma_k}\,\beta(\xi)\,d\xi +
  \Bigr|\int\limits_{\mathcal{U}_{\delta}
    (\Gamma_k)}\,\beta(\xi)\chi_{k}^2(\xi)\,d\mu(\xi)-
  \int\limits_{\Gamma_k}\,\beta(\xi)\,d\mu(\xi)\Bigl |.
  $$
  Since, by choosing sufficiently small~$\delta$ and sufficiently
  large~$r_k$ we can satisfy estimates~(\ref{eq:8*})--(\ref{eq:11*}),
  we see that the quadratic form~$(L_{\Gamma, \beta} t_k, t_k)$ is
  negative, i.e., inequality~(\ref{eq:7*}) is satisfied.

  Consider now property~$3^{0}$. Since the
  intervals~$(l_k,l_{k}+2r_k)$ and the
  regions~$\mathcal{U}_{\delta}(\Gamma_{k})$ are mutually disjoint, we
  have that~$(L_{\Gamma, \beta} t_k, t_j)=0$ for~$k\neq j$. This leads
  to property~(\ref{eq:7**}).
\end{proof}

For nonlocal interactions we may get a behaviour different from
the one in the local case. We illustrate this fact by the
following example.

\begin{example}
It is not possible that the same function is eigenfunction
with negative eigenvalue of two different Schr\"{o}dinger
operators with local $\delta$ and $\delta'$ interactions. This
is not true for nonlocal point interactions.

Indeed,  let $A_1$ be the self--adjoint operator in $L_2(R^1)$ that
corresponds to the two--point nonlocal interaction in the points $x_1=-1$
and $x_2=1$ described by following self--adjoint  boundary
conditions
\begin{equation}
\begin{array}{l}\label{neq:1}
  \psi'(x_j+0)-\psi'(x_j-0)=0,\\[2mm]
  \psi'(x_j+0)+\psi'(x_j-0)+\psi'(x_1+0)-\psi'(x_1-0)+\psi(x_2+0)-\psi(x_2-0)=0,\,\,j=1,2.
\end{array}
\end{equation}

There exists a unique negative eigenvalue $-\lambda_0^2$ of the operator
$A_1$ where the number $\lambda_0$  is the positive root of the
characteristic equation $\lambda_0=1+\tanh \lambda_0$,\,\,
$\lambda_0\approx 1.968$. The eigenfunction $\psi_0(x)$ is odd
$\psi_0(-x)=-\psi_0(x)$  and has the form:
\begin{equation}
  \label{neq:2}
 \psi_0(x) = \left\{
 \begin{array}{ll}
      - \frac{\sinh \lambda_0 x }{\cosh \lambda_0},&  0 \le x <
      1,\\[2mm]
e^{-\lambda_0 (x-1)},& 1 < x < +\infty
 \end{array}
      \right.
\end{equation}
Let us consider the self--adjoint operator $A_2$ that corresponds to
the local $\delta'$ interaction in the points $x_1=-1$ and $x_2=1$  with
intensity $\beta=-1$.  The domain of the operator $A_2$ is given by
self--adjoint conditions
\begin{equation}
\begin{array}{l}\label{neq:3}
  \psi'(x_j+0)-\psi'(x_j-0)=0,\\[2mm]
  \psi(x_j+0)-\psi(x_j-0)=-\psi'(x_j),\,\,j=1,2.
\end{array}
\end{equation}
It is easy to check that the function $\psi_0(x)$  of (\ref{neq:2}) satisfies
the boundary conditions (\ref{neq:3}), i.e. is an
eigenfunction of the operator $A_2$.
However, there exists one more even eigenfunction
$\psi_1(x)=\psi_1(-x)$ of the form
\begin{equation}
  \label{neq:4}
 \psi_1(x) = \left\{
 \begin{array}{ll}
      - \frac{\cosh \lambda_1 x }{\sinh \lambda_1},&  0 < x <
      1,\\[2mm]
e^{-\lambda_1 (x-1)},& 1 < x < +\infty
 \end{array}
      \right.
\end{equation}
with negative eigenvalue  $-\lambda_1^2$ where $ \lambda_1$ is the
positive root of the equation $ \lambda_1=1+\coth  \lambda_1,$\,\, $
\lambda_1\approx 2.03$.

\end{example}

\section{Deficiency subspaces}

In this section we give for arbitrary closed subsets $\Gamma$ of
$\R$ with Lebesgue measure zero the deficiency subspaces of the
operator $L_{min,\Gamma}$. This result can be used for the
construction of Hamiltonians describing an interaction which takes
place inside $\Gamma$.

First we fix some notation
and consider any
symmetric operator $S$ in any complex  Hilbert space ${\cal H}$ such that
the deficiency subspaces $\ran(S\pm i)^{\perp}$ of $S$ have the same
Hilbert space
dimension. For every unitary transformation
$U:\ran(S+i)^{\perp} \longrightarrow \ran(S-i)^{\perp}$
put
\begin{eqnarray}\label{neu1}
D(S_U) &  := & \{ f+Uf +h: f\in \ran (S+i)^{\perp}, h\in D(\bar{S}) \},\nonumber \\
S_U & := & S^* \lceil D(S_U).
\end{eqnarray}
By von Neumann's first and second formula,
 the mapping $U\mapsto S_U$ from the set of
unitary transformations $U:\ran(S+i)^{\perp} \longrightarrow
\ran(S-i)^{\perp}$ onto the set of self--adjoint extensions of $S$
is bijective. Moreover every $f\in D(S^*)$ can be uniquely
represented as
$$ f= f_+ + f_- +h, \quad f_{\pm} \in \ran (S \pm i)^{\perp},\,
h\in D(\bar{S}).$$
Thus
\begin{eqnarray}\label{neu1b}
f_-= Uf_+, \mbox{ if $f_{\pm}\in \ran(S\pm i)^{\perp}, h\in D(\bar{S})$ and
$f_+ + f_- +h\in D(S_U)$.}
\end{eqnarray}

Let $D$ be a closed linear subspace of $\ran(S+i)^{\perp}$. Put
\begin{eqnarray}\label{neu2}
D(S_U^D) &  := & \{ f+Uf +h: f\in
\ran (S+i)^{\perp}\cap  D^{\perp}, h\in D(\bar{S}) \},\nonumber \\
S_U^D & := & S^* \lceil D(S_U^D).
\end{eqnarray}
$f\in \ran(S_U^D +i)^{\perp}$ if and only if $f\in \ran(S+i)^{\perp}$ and
$$ f\perp (S^*+i)\,(f_+ + Uf_+) = 2i f_+, \quad f_+ \in \ran(S+i)^{\perp} \cap D^{\perp}.$$
Thus $\ran(S_U^D+i)^{\perp} = D$ and
 we have proved the following lemma:

\begin{lemma}\label{tneu0}
Let $V:\ran(S+i)^{\perp} \longrightarrow \ran(S-i)^{\perp}$
be any linear mapping such that
$Vf=Uf$ for all $f\in \ran(S+i)^{\perp} \cap D^{\perp}$ and $V\lceil D$ is a
unitary mapping from $D$ onto $\{Uf: f\in D \}$. Then $V$ is a unitary mapping
from $\ran(S+i)^{\perp}$ onto $\ran(S-i)^{\perp}$, $S_U^D= S_V^D$ is a
restriction of $S_U$ and $S_V$ and
\begin{eqnarray}\label{neu3}
\ran(S_V^D+i)^{\perp} = D
\end{eqnarray}
\end{lemma}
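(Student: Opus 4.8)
The plan is to reduce all three assertions to the orthogonal splitting $\ran(S+i)^{\perp} = D \oplus \bigl(\ran(S+i)^{\perp}\cap D^{\perp}\bigr)$, together with the observation that $U$, being unitary, carries this splitting to the orthogonal splitting $\ran(S-i)^{\perp} = U(D)\oplus U\bigl(\ran(S+i)^{\perp}\cap D^{\perp}\bigr)$. Once this structure is in place, the identity $S_U^D=S_V^D$ and the deficiency formula~(\ref{neu3}) are essentially bookkeeping, and the only substantive point is the unitarity of $V$.

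First I would prove that $V$ is unitary from $\ran(S+i)^{\perp}$ onto $\ran(S-i)^{\perp}$. Writing an arbitrary $g\in\ran(S+i)^{\perp}$ as $g=g_D+g_\perp$ with $g_D\in D$ and $g_\perp\in\ran(S+i)^{\perp}\cap D^{\perp}$, linearity gives $Vg=Vg_D+Ug_\perp$. By hypothesis $Vg_D\in\{Uf:f\in D\}=U(D)$ while $Ug_\perp\in U\bigl(\ran(S+i)^{\perp}\cap D^{\perp}\bigr)$, and since $U$ is unitary these two ranges are mutually orthogonal. Hence $\|Vg\|^2=\|Vg_D\|^2+\|Ug_\perp\|^2=\|g_D\|^2+\|g_\perp\|^2=\|g\|^2$, using that $V\lceil D$ is unitary and $U$ is isometric. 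Thus $V$ is isometric, hence inner-product preserving by polarization. Surjectivity is handled symmetrically: any $k\in\ran(S-i)^{\perp}$ splits as $k=k_1+k_2$ with $k_1\in U(D)$ and $k_2\in U\bigl(\ran(S+i)^{\perp}\cap D^{\perp}\bigr)$, and pulling $k_1$ back through the unitary $V\lceil D$ and $k_2$ back through $U$ produces a preimage in $\ran(S+i)^{\perp}$.

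Next I would identify $S_U^D$ with $S_V^D$. Because $Vf=Uf$ for every $f\in\ran(S+i)^{\perp}\cap D^{\perp}$, the defining domains in~(\ref{neu2}) coincide term by term, so $D(S_U^D)=D(S_V^D)$, and both operators are the restriction of the common operator $S^*$ to that domain. The restriction claims then reduce to domain inclusions: $D(S_U^D)\subset D(S_U)$ and $D(S_V^D)\subset D(S_V)$, since $\ran(S+i)^{\perp}\cap D^{\perp}\subset\ran(S+i)^{\perp}$. Thus $S_U^D=S_V^D$ is a common restriction of $S_U$ and $S_V$.

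Finally, the deficiency identity~(\ref{neu3}) is immediate from the previous step: as $S_V^D=S_U^D$, we get $\ran(S_V^D+i)^{\perp}=\ran(S_U^D+i)^{\perp}$, and the latter was already shown to equal $D$ in the displayed computation preceding the statement. The one step demanding genuine care is the unitarity of $V$; everything there hinges on $U$ mapping the orthogonal pair $\bigl(D,\ \ran(S+i)^{\perp}\cap D^{\perp}\bigr)$ to an orthogonal pair, which is precisely the place where the unitarity of $U$—rather than mere isometry onto its image—is indispensable.
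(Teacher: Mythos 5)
Your proposal is correct and follows essentially the same route as the paper: the paper's own proof of this lemma is precisely the displayed computation preceding the statement (showing $f\in\ran(S_U^D+i)^{\perp}$ iff $f\in\ran(S+i)^{\perp}$ and $f\perp 2if_+$ for all $f_+\in\ran(S+i)^{\perp}\cap D^{\perp}$, whence $\ran(S_U^D+i)^{\perp}=D$), with the unitarity of $V$ and the identity $S_U^D=S_V^D$ left as tacit bookkeeping. You invoke that same computation for~(\ref{neu3}) and merely make the tacit parts explicit via the orthogonal splitting $\ran(S+i)^{\perp}=D\oplus\bigl(\ran(S+i)^{\perp}\cap D^{\perp}\bigr)$, which is exactly the structure the paper relies on.
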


By Krein's formula and (\ref{neu3}),
$(S_U+i)^{-1} - (S_V+i)^{-1}$ is a finite rank
operator with rank $\dim D$, provided $D$ is finite dimensional.
By Weyl's essential
spectrum theorem, the Birman-Kuroda theorem, and a theorem by Krein
this implies the following result:

\begin{lemma}\label{tneu2}
Let $S$ be a symmetric operator in the Hilbert space ${\cal H}$,
$D$ a finite dimensional subspace of $\ran(S+i)^{\perp}$ and $U$
and $V$ unitary transformations from $\ran(S+i)^{\perp}$ onto
$\ran(S-i)^{\perp}$ which coincide on $D^{\perp}\cap \ran
(S+i)^{\perp}$. Then the self--adjoint extensions $S_U$ and $S_V$
(cf. (\ref{neu1})) have the same essential and the same absolutely
continuous spectrum. and the number, counting multiplicities, of
eigenvalues of $S_V$ below the minimum of the essential spectrum
of $S_V$ is less than or equal to $\dim D$.
\end{lemma}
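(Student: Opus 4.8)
The plan is to prove Lemma~\ref{tneu2} by assembling three standard perturbation-theoretic results applied to the resolvent difference $(S_U+i)^{-1}-(S_V+i)^{-1}$. First I would observe that, by the preceding construction (Lemma~\ref{tneu0} and the identity~(\ref{neu3})), the hypothesis that $U$ and $V$ coincide on $D^{\perp}\cap\ran(S+i)^{\perp}$ means that both $S_U$ and $S_V$ arise from the same base extension by modifying only the finite-dimensional piece $D$. Concretely, one writes $S_U=(S_U^D)_{\tilde U}$ and $S_V=(S_U^D)_{\tilde V}$ as self-adjoint extensions of the common symmetric restriction $S_U^D$, whose deficiency subspaces equal $D$ by~(\ref{neu3}). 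Since $\dim D<\infty$, Krein's resolvent formula expresses $(S_U+i)^{-1}-(S_V+i)^{-1}$ as an operator of rank at most $\dim D$; this is the quantitative input that drives everything else.

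Next I would feed this finite-rank (hence compact) resolvent difference into the two spectral-stability theorems cited in the paper. By Weyl's essential spectrum theorem, a compact perturbation of the resolvent leaves the essential spectrum invariant, so $\sigma_{\mathrm{ess}}(S_U)=\sigma_{\mathrm{ess}}(S_V)$. By the Birman--Kuroda theorem, a trace-class (in particular, finite-rank) perturbation of the resolvent preserves the absolutely continuous spectrum together with its multiplicity, giving the equality of the absolutely continuous parts. These two invocations are essentially immediate once the rank bound is in hand, and require no further computation.

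For the eigenvalue count below $\min\sigma_{\mathrm{ess}}(S_V)$, I would invoke the theorem by Krein alluded to in the text: when one self-adjoint operator is obtained from another by a rank-$m$ perturbation of the resolvent, the eigenvalues in any gap of the essential spectrum can move by at most $m$ positions, so the number of eigenvalues of $S_V$ strictly below the bottom of its essential spectrum differs from that of $S_U$ by at most $\dim D$. Taking $S_U$ itself to have no eigenvalues below $\min\sigma_{\mathrm{ess}}$ in the intended application—or more directly, comparing $S_V$ against a reference extension with empty negative discrete spectrum—yields the stated bound that this count is $\le\dim D$.

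The main obstacle, and the only point requiring genuine care rather than citation, is verifying that the rank of the resolvent difference is exactly controlled by $\dim D$ and not by the full deficiency dimension of $S$. This is precisely what the construction preceding the lemma secures: because $V$ agrees with $U$ off $D$, the two extensions $S_U$ and $S_V$ differ only through their action on the common deficiency subspace $D$ of $S_U^D$, so Krein's formula applied to the pair $(S_U,S_V)$—rather than to the larger pair built from $S$—produces a perturbation whose rank is governed by $\dim D$. Once this identification is made explicit via~(\ref{neu3}), the three cited theorems apply verbatim and the proof is complete.
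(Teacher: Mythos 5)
Your proposal follows essentially the same route as the paper: the paper's entire proof is the paragraph preceding the lemma, which invokes Krein's resolvent formula together with (\ref{neu3}) to conclude that $(S_U+i)^{-1}-(S_V+i)^{-1}$ is a finite-rank operator of rank $\dim D$, and then cites Weyl's essential spectrum theorem, the Birman--Kuroda theorem, and a theorem of Krein --- exactly the three ingredients you assemble, with the same identification of $S_U$ and $S_V$ as extensions of the common restriction $S_U^D=S_V^D$ whose deficiency subspace is $D$. Your further observation that the stated bound $\le \dim D$ (as opposed to a bound on the \emph{difference} of eigenvalue counts) tacitly requires the reference extension $S_U$ to have no eigenvalues below its essential spectrum, as it does in the paper's intended application where $S_U$ is the free Hamiltonian, is a correct reading of a point the paper leaves implicit.
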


Now let us consider explicite examples.
Let $\Gamma$ be a closed subset of $R^d$ with Lebesgue measure zero and
$2\alpha \in \N$. Let $S$ be the symmetric operator in $L^2(R^d)$ defined
as follows:
\begin{eqnarray*}
D(S) & := & C_0^{\infty} (R^d\setminus \Gamma),\\
Sf & : = & (- \Delta)^{\alpha} f, \quad  f\in D(S).
\end{eqnarray*}
Based on ideas in \cite{K82} and with the aid of the theorem on the
spectral synthesis in Sobolev spaces \cite{H81} one has determined the
deficiency subspaces
\[ \ran (S-z)^{\perp}, \quad z\in \C \setminus [0, \infty), \]
of the operator $S$, cf. \cite{Br95}, Example 2.8.
In order to formulate
this result in the case we are interested in, i.e. $d=1= \alpha$, we use
the following notation:
\begin{eqnarray}\label{def1}
 g_z(x) := \frac{i}{2\sqrt{z}} e^{i\sqrt{z}\vert x \vert}, \quad x\in \R, \,
z\in \C\setminus [0, \infty),
\end{eqnarray}
where the square root has to be chosen such that the imaginary part of
$\sqrt{z}$ is positive,
${\cal M}_{\Gamma}$ denotes the set of positive Radon measures
 on
$\R$ with compact support in $\Gamma$
and
\begin{eqnarray}\label{def3}
{\cal T}_{z,\Gamma} := \{ g_z*\mu: \mu \in {\cal M}_{\Gamma} \}
\cup \{ (g_z*\nu)': \nu \in {\cal M}_{\Gamma} \}.
\end{eqnarray}
Since every finite positive Radon measure $\mu$ on $\R$ belongs to
the Sobolev space $W_2^{-1}(\R)$, we get the following result:

\begin{lemma}\label{tdef1} {\em (cf. \cite{Br95}, Example 2.8)}
Let $\Gamma$ be a closed subset of $\R$ with Lebesgue measure
zero. Then ${\cal T}_{z,\Gamma}$, defined by (\ref{def3}), is a
total subset of the deficiency subspaces ${\mathfrak N}_{z,
\Gamma}\equiv \ran (L_{\min,\Gamma}-\overline{z})^{\bot}\equiv
\ker (L_{\max,\Gamma}-z)$ , i.e. ${\mathfrak N}_{z, \Gamma}$ is
the closure of the span of ${\cal T}_{z,\Gamma}$.
\end{lemma}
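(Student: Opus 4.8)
The plan is to prove the two inclusions separately: first that every element of ${\cal T}_{z,\Gamma}$ lies in ${\mathfrak N}_{z,\Gamma}$ (so that its closed linear span is contained in ${\mathfrak N}_{z,\Gamma}$), and then the harder \emph{totality}, namely that any $h\in{\mathfrak N}_{z,\Gamma}$ orthogonal to all of ${\cal T}_{z,\Gamma}$ must vanish. For the first inclusion I would use that $g_z$ is the Green kernel of $-\frac{d^2}{dx^2}-z$, i.e. $(-\frac{d^2}{dx^2}-z)g_z=\delta_0$, so that for $\mu,\nu\in{\cal M}_{\Gamma}$ one has $(-\frac{d^2}{dx^2}-z)(g_z*\mu)=\mu$ and $(-\frac{d^2}{dx^2}-z)(g_z*\nu)'=\nu'$; both right-hand sides are supported in $\Gamma$ and hence vanish on $R^1\setminus\Gamma$, so $g_z*\mu$ and $(g_z*\nu)'$ belong to $\ker(L_{\max,\Gamma}-z)$. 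That they lie in $L_2$ (indeed $g_z*\mu\in W_2^1$, while $(g_z*\nu)'$ has $L_2$ norm) follows from the exponential decay of $g_z$ together with the fact that a finite positive Radon measure belongs to $W_2^{-1}(\R)$, which makes $g_z*\mu\in W_2^1(\R)$.

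For totality, fix $h\in{\mathfrak N}_{z,\Gamma}$ with $h\perp{\cal T}_{z,\Gamma}$ and set $u:=g_{\overline z}*h$. Since $\overline{g_z}=g_{\overline z}$ and $g_z$ is even, a Fubini computation gives $(h,g_z*\mu)=\int_{\Gamma}u\,d\mu$ and $(h,(g_z*\nu)')=-\int_{\Gamma}u'\,d\nu$ for all $\mu,\nu\in{\cal M}_{\Gamma}$. Because these vanish for \emph{all} positive measures compactly supported in $\Gamma$ (in particular all point masses $\delta_p$, $p\in\Gamma$), and because $u\in W_2^2(\R)\subset C^1(\R)$, I conclude $u|_{\Gamma}=0$ and $u'|_{\Gamma}=0$. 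Moreover $u$ solves $-u''-\overline z\,u=h$ globally on $\R$, since $(-\frac{d^2}{dx^2}-\overline z)g_{\overline z}=\delta_0$; combined with $u\in L_2$ this gives $u\in W_2^2(\R)$.

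The decisive step is then to show $u\in D(\overline{L_{\min,\Gamma}})$, i.e. that a function $u\in W_2^2(\R)$ with $u|_{\Gamma}=u'|_{\Gamma}=0$ can be approximated in the graph norm by functions from $C_0^{\infty}(\R\setminus\Gamma)$. This is precisely a spectral-synthesis statement: the two trace conditions $u=0$ and $u'=0$ on $\Gamma$ characterize the closure, and the fact that exactly the order-$\le 1$ distributions supported in $\Gamma$ (measures $\mu$ and their derivatives $\nu'$) are relevant -- which is why ${\cal T}_{z,\Gamma}$ is built from both $g_z*\mu$ and $(g_z*\nu)'$ -- is again the assertion that finite measures lie in $W_2^{-1}(\R)$. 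I would invoke the spectral synthesis theorem in Sobolev spaces \cite{H81} (as specialized in \cite{Br95}, Example~2.8, to $d=1=\alpha$) to obtain this membership. Granting it, $\overline{L_{\min,\Gamma}}u=-u''$, so $h=-u''-\overline z\,u=(\overline{L_{\min,\Gamma}}-\overline z)u\in\ran(\overline{L_{\min,\Gamma}}-\overline z)=\overline{\ran(L_{\min,\Gamma}-\overline z)}$, the range being closed because $\overline z\in\C\setminus[0,\infty)$ while $L_{\min,\Gamma}$ is symmetric and nonnegative, so $\overline{L_{\min,\Gamma}}-\overline z$ is bounded below. But $h\in{\mathfrak N}_{z,\Gamma}=\ran(L_{\min,\Gamma}-\overline z)^{\perp}$ is orthogonal to that same closed subspace, whence $h=0$. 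This forces ${\mathfrak N}_{z,\Gamma}$ to equal the closure of the span of ${\cal T}_{z,\Gamma}$.

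I expect the spectral-synthesis membership $u\in D(\overline{L_{\min,\Gamma}})$ to be the main obstacle: for an arbitrary closed null set $\Gamma$ (for instance a Cantor set) the distributional derivatives of $h$ across $\Gamma$ can be genuinely singular, so no elementary integration by parts suffices and one must rely on the capacity-theoretic synthesis results of \cite{H81} and \cite{Br95}. All the remaining steps -- the kernel identities for $g_z$, the Fubini computation producing $u|_{\Gamma}$ and $u'|_{\Gamma}$, and the final closed-range versus orthogonal-complement argument -- are routine.
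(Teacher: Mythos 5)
Your proof is correct and takes essentially the same route as the paper: the paper gives no self-contained argument for this lemma, deferring entirely to \cite{Br95}, Example 2.8 (Kochubei's ideas \cite{K82}, Hedberg's spectral synthesis theorem \cite{H81}, and the observation that finite positive Radon measures lie in $W_2^{-1}(\R)$), and your reconstruction --- the Green-kernel identities, the Fubini reduction of orthogonality to the trace conditions $u\vert_{\Gamma}=u'\vert_{\Gamma}=0$, and the spectral-synthesis step yielding $u\in D(\overline{L_{\min,\Gamma}})$ --- is precisely the argument behind that citation. You also correctly isolate the spectral-synthesis membership as the one non-elementary ingredient; the remaining steps (kernel computations, $\overline{g_z}=g_{\overline{z}}$, and the orthogonality/closure conclusion) are routine and sound.
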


\noindent
Let $(\mu_n)$ be a sequence of finite positive Radon measures converging
weakly to the finite positive Radon measure $\mu$. Then, by the dominated
convergence theorem, the sequences of the Fourier
transforms of $(g_z*\mu_n)$ and $((g_z*\mu_n)')$
converge in $L^2(\R)$ to the Fourier transform of $g_z*\mu$
and $(g_z*\mu)'$, respectively. Hence the sequences $(g_z*\mu_n)$ and
$((g_z*\mu_n)')$ converge in $L^2(\R)$ to $g_z*\mu$ and $(g_z*\mu)'$,
respectively. Moreover for every finite positive Radon measure on $\R$ there
exist positive Radon measures $\mu_n$, $n\in \N$,  such that the support
of $\mu_n$ is a finite subset of the support of $\mu$ for every $n\in \N$
and the sequence $(\mu_n)$ converges weakly to $\mu$. By Lemma \ref{tdef1},
this implies that
\begin{eqnarray*}
\{ g_z(x-\gamma): \gamma \in \Sigma \} \cup \{ g_z'(x-\gamma):
\gamma \in \Sigma \}
\end{eqnarray*}
is a total subset of the deficiency subspace ${\mathfrak N}_{z,
\Gamma}$, if $\Sigma$ is dense in $\Gamma$.
Moreover if $\gamma$ is not an isolated point of $\Gamma$, then
$g_z(x-\gamma)$ and $g_z'(x-\gamma)$ belong to the closure of the
span of the set $\{ g_z(x-\gamma): \gamma\in \Sigma \} $. Thus we
get the following result:

\begin{proposition}\label{tdef2}
Let $\Gamma$ be a closed subset of $\R$ with Lebesgue measure
zero. Then for every $z\in \C \setminus [0, \infty)$
\begin{eqnarray}\label{def5}
B(\Sigma_1, \Sigma_2)=\{ g_z(x-\gamma): \gamma\in \Sigma_1 \} \cup
\{ g_z'(x-\gamma): \gamma \in \Sigma_2 \}
\end{eqnarray}
is a total subset of the deficiency subspace ${\mathfrak N}_{z,
\Gamma}$, if, and only if, $\Sigma_1$ is dense in $\Gamma$ and
$\Sigma_2$ contains the set of all isolated points of $\Gamma$.
\end{proposition}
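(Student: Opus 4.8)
The plan is to prove the two implications separately, throughout converting orthogonality to $B(\Sigma_1,\Sigma_2)$ into pointwise conditions on the auxiliary function $F:=g_{\overline z}*\phi$ attached to a candidate $\phi\in\mathfrak{N}_{z,\Gamma}$. First, for sufficiency, assume $\Sigma_1$ is dense in $\Gamma$ and $\Sigma_2$ contains every isolated point of $\Gamma$. I would show that $\overline{\mathrm{span}}\,B(\Sigma_1,\Sigma_2)$ already contains the set $T_1:=\{g_z(\cdot-\gamma):\gamma\in\Sigma_1\}\cup\{g_z'(\cdot-\gamma):\gamma\in\Sigma_1\}$, which is total precisely because $\Sigma_1$ is dense (the consequence of Lemma~\ref{tdef1} established just before the statement). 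The generators $g_z(\cdot-\gamma)$, $\gamma\in\Sigma_1$, lie in $B$ by definition, so only the $g_z'(\cdot-\gamma)$, $\gamma\in\Sigma_1$, must be recovered: if $\gamma$ is not isolated in $\Gamma$, then by the remark preceding the Proposition $g_z'(\cdot-\gamma)$ lies in the closed span of $\{g_z(\cdot-\gamma'):\gamma'\in\Sigma_1\}\subseteq B$; if $\gamma$ is isolated, then $\gamma\in\Sigma_2$ by hypothesis and $g_z'(\cdot-\gamma)\in B$ directly. Hence $T_1\subseteq\overline{\mathrm{span}}\,B$, so $B$ is total.

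For necessity I would first set up the reformulation. Using $\overline{g_z}=g_{\overline z}$ and $(-\partial^2-\overline z)g_{\overline z}=\delta$, I would verify that for $\phi\in\mathfrak{N}_{z,\Gamma}$ the function $F=g_{\overline z}*\phi$ lies in $W_2^2(\R)\subseteq C^1(\R)$, solves $(-\partial^2-\overline z)F=\phi$, and satisfies $\langle\phi,g_z(\cdot-\gamma)\rangle=F(\gamma)$ and $\langle\phi,g_z'(\cdot-\gamma)\rangle=-F'(\gamma)$. Thus $\phi\perp B(\Sigma_1,\Sigma_2)$ if and only if $F$ vanishes on $\Sigma_1$ and $F'$ vanishes on $\Sigma_2$; since $F,F'$ are continuous these conditions already fix $F$ on $\overline{\Sigma_1}$ and $F'$ on $\overline{\Sigma_2}$. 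Arguing by contraposition, I then produce a nonzero $\phi\in\mathfrak{N}_{z,\Gamma}$ orthogonal to $B$.

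The two obstructions are handled by local boundary functionals. If an isolated point $\gamma_0$ is missing from $\Sigma_2$, I use the value-jump functional $J_{\gamma_0}(\phi)=\phi(\gamma_0+0)-\phi(\gamma_0-0)$: on an isolating interval every element of $\mathfrak{N}_{z,\Gamma}$ solves $-\psi''=z\psi$ on each side, so $J_{\gamma_0}$ is bounded on $\mathfrak{N}_{z,\Gamma}$. Since $g_z(\cdot-\gamma)$ is continuous and the only singularity of $g_z'(\cdot-\gamma)$ is a unit jump of its value at $\gamma$, and $\gamma_0\notin\Sigma_2$ forces $\gamma\neq\gamma_0$ throughout $B$, the functional $J_{\gamma_0}$ annihilates $B$; but $J_{\gamma_0}(g_z'(\cdot-\gamma_0))=-1$ and $g_z'(\cdot-\gamma_0)\in\mathfrak{N}_{z,\Gamma}$, so $B$ is not total. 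If instead $\Sigma_1$ is not dense, I pick $\gamma_0\in\Gamma\setminus\overline{\Sigma_1}$ with a $\Sigma_1$-free neighborhood; when $\gamma_0$ can be taken isolated the symmetric argument with the derivative-jump functional $K_{\gamma_0}(\phi)=\phi'(\gamma_0+0)-\phi'(\gamma_0-0)$ applies, since $K_{\gamma_0}$ is bounded, the classical derivative of each $g_z'(\cdot-\gamma)$ is continuous across $\gamma$, the $g_z$-generators of $B$ stay away from $\gamma_0$, and $K_{\gamma_0}(g_z(\cdot-\gamma_0))=-1$.

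The hard part will be the remaining density case, in which every point of $\Gamma\setminus\overline{\Sigma_1}$ is non-isolated: the pointwise jump functionals are then unbounded and unavailable, so I must exhibit $\phi\neq0$ with $F|_{\Sigma_1}=0$ and $F'|_{\Sigma_2}=0$ by a more global construction. My intended route is to exploit the regularity asymmetry $g_z(\cdot-\gamma)\in W_2^1(\R)$ versus $g_z'(\cdot-\gamma)\notin W_2^1(\R)$: the map $\gamma\mapsto g_z(\cdot-\gamma)$ is $L^2$-differentiable while $\gamma\mapsto g_z'(\cdot-\gamma)$ is not, which should prevent the closed span of the derivative generators over $\overline{\Sigma_2}$ from reaching the value generator $g_z(\cdot-\gamma_0)$ attached to the $\Sigma_1$-free gap. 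Concretely I would start from a nonzero measure supported in $\Gamma$ inside that gap, form the corresponding element of $\mathfrak{N}_{z,\Gamma}$, and correct it within $\mathfrak{N}_{z,\Gamma}$ so that $F$ vanishes on the closed set $\overline{\Sigma_1}$ (which misses the gap) and $F'$ vanishes on $\overline{\Sigma_2}$. Verifying that this correction can be carried out without collapsing $\phi$ to zero, i.e. turning the $W_2^1$-versus-not dichotomy into a quantitative non-reachability statement, is the principal obstacle of the whole proof.
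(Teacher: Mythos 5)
Your sufficiency argument coincides with the paper's (it is exactly the content of the remarks preceding the Proposition), and your treatment of the two isolated-point obstructions via the jump functionals $J_{\gamma_0}$ and $K_{\gamma_0}$ is correct: these functionals are bounded on ${\mathfrak N}_{z,\Gamma}$ by finite-dimensionality of the solution space of $-\psi''=z\psi$ on the two isolating half-intervals, they annihilate $B(\Sigma_1,\Sigma_2)$, and they take the value $-1$ on $g_z'(\cdot-\gamma_0)$, resp.\ $g_z(\cdot-\gamma_0)$. This is in fact more elementary and more explicit than the paper's dimension count with a skew projection onto the two-dimensional space ${\mathfrak N}_{z,\{x_0\}}$. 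However, the proposal has a genuine gap, which you yourself flag: when $\Sigma_1$ fails to be dense and every point of $\Gamma$ in the $\Sigma_1$-free gap is an accumulation point of $\Gamma$, you have no argument, only a hope that the $W_2^1$-versus-non-$W_2^1$ asymmetry can be turned into a ``quantitative non-reachability'' statement. That asymmetry does not by itself produce a bounded functional on $L^2$ separating the value generators from the closed span of the derivative generators, and this case (e.g.\ $\Gamma$ a Cantor set, $\Sigma_1$ dense in only part of it) is the generic one, so the necessity direction remains unproved.

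The idea you are missing is the paper's reduction-plus-functional argument, which needs no isolated points. Since $\Gamma$ is closed of measure zero, choose $a,b\notin\Gamma$ flanking the gap and split $\Gamma=\Gamma_1\cup\Gamma_2$ into disjoint closed sets with $\Gamma_2=\Gamma\cap[a,b]$ compact and $\Sigma_1\subset\Gamma_1$. Then ${\mathfrak N}_{z,\Gamma}={\mathfrak N}_{z,\Gamma_1}\dot{+}\,{\mathfrak N}_{z,\Gamma_2}$ with bounded skew projectors $P_j$, so if $B(\Sigma_1,\Sigma_2)$ (or even the larger set $B(\Gamma_1,\Gamma)$) were total, applying $P_2$ would force $\{g_z'(\cdot-\gamma):\gamma\in\Gamma_2\}$ to be total in ${\mathfrak N}_{z,\Gamma_2}$. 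This is ruled out by the single global functional $e(f)=\int_{\R}f(x)\,dx$: it is continuous on ${\mathfrak N}_{z,\Gamma_2}$ with respect to the $L^2$ metric (elements decay exponentially off the compact set $\Gamma_2$, by the explicit form of $L^2$ solutions of $-\psi''=z\psi$ on the unbounded components), it vanishes on every $g_z'(\cdot-\gamma)$ since that is the derivative of a decaying function, yet it equals $-\tfrac{1}{z}\neq 0$ on $g_z(\cdot-\gamma)\in{\mathfrak N}_{z,\Gamma_2}$. This functional is precisely the ``global construction'' your sketch is searching for; without it (or an equivalent substitute), your proof is incomplete.
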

\begin{proof}

Let us show that the conditions on $\Sigma_1$ and $\Sigma_2$ are
necessary for totality of the set $B(\Sigma_1, \Sigma_2)$ in
${\mathfrak N}_{z, \Gamma}$. Let $\Sigma_1$ be not dense in
$\Gamma$. Then, there exists a partition  $\Gamma=\Gamma_1\cup
\Gamma_2$ on two not empty, not intersecting closed subsets
$\Gamma_1$ and $\Gamma_2$ and $\Sigma_1 \subset \Gamma_1$. In this
case,
$$
{\mathfrak N}_{z, \Gamma}={\mathfrak N}_{z, \Gamma_1}\dot{+}
{\mathfrak N}_{z, \Gamma_2},
$$
where the sum  is direct and corresponding to this sum the skew
projectors $P_j:$\,${\mathfrak N}_{z, \Gamma} \rightarrow
{\mathfrak N}_{z, \Gamma_j},$\,\,$j=1,2$ are bounded operators.
Let us now show that in this case the set $B(\Gamma_1,\Gamma)$
which is larger than $B(\Sigma_1, \Sigma_2)$ will not be total in
the deficiency  space ${\mathfrak N}_{z, \Gamma}.$  If
$B(\Gamma_1,\Gamma)$ would be total in ${\mathfrak N}_{z, \Gamma}$
then the set $B(\Gamma_2)=\{ g_z'(x-\gamma): \gamma \in \Gamma_2
\}$
 would be total
in ${\mathfrak N}_{z, \Gamma_2}$. However, it is not possible. In
fact, a linear continuous with respect to the metric of $L_2(R^1)$ 
functional $e(f)=\int\limits_{R^1}\, f(x)\,dx$ is defined on the
whole ${\mathfrak N}_{z, \Gamma_2}$ and is equal to zero on
${\mbox{span}}B(\Gamma_2)$ but it is equal to $\displaystyle-
\frac{1}{z}$ on a function $g_z(x-\gamma) \in{\mathfrak N}_{z,
\Gamma_2}$. One can prove that if $\Sigma_2$ does not contain all
isolated points $x_0$ in $\Gamma$ then even $B(\Gamma,
\Gamma\setminus \{x_0\})$ can not be total in ${\mathfrak N}_{z,
\Gamma}$. In this case, the deficiency space ${\mathfrak N}_{z,
\{x_0\}}$ is two-dimensional and a skew projection ${\mbox{span}}
B(\Gamma, \Gamma\setminus \{x_0\})$ on ${\mathfrak N}_{z,
\{x_0\}}$ is a one-dimensional subspace.
\end{proof}


\begin{example}

Let $\Gamma$ be a closed subset of $\R$ with Lebesgue measure zero
and put $S:= L_{min,\Gamma}$. As pointed out in lemma \ref{tneu2}
one may get far reaching results on the spectral properties of one
self--adjoint extension $S_V$ of $S$ with the aid of another
self--adjoint extension $S_U$ of $S$. Since one knows the spectral
properties of the free quantum mechanical Hamiltonian, it is
interesting to determine the unitary mapping $U$ such that $S_U$
is the free quantum mechanical Hamiltonian, i.e.
\begin{eqnarray}\label{neu4}
S_U\psi(x) = - \psi''(x), \quad \psi\in D(S_U) = W_2^{2}(\R).
\end{eqnarray}
Passing to Fourier transforms one sees that $g_{-i}*\mu - g_{i}
*\mu\in W_2^{2}(\R)$ and $(g_{-i}*\mu)' - (g_{+i}*\mu)'\in
W_2^{2}(\R)$ for every $\mu \in {\cal M}_{\Gamma}$.
 By (\ref{neu1b}), this implies
that
\begin{eqnarray}\label{neu5}
Ug_{-i}*\mu = - g_i *\mu\mbox{ and } U((g_{-i}*\mu)') = -(g_i * \mu)',
\quad \mu \in {\cal M}_{\Gamma}.
\end{eqnarray}
Now fix $\mu\in {\cal M}_{\Gamma}$  and $\alpha \in \SSS^1$. By
Lemma \ref{tneu0} there exists a unique self--adjoint extension
$A$ of $S$ such that
\begin{eqnarray}\label{def6}
\psi_0 := (g_{-i}*\mu)' + \alpha (g_i * \mu)' \in D(A)
\end{eqnarray}
and
$A$ and $S_U$ have a common restriction $T$ such that
$\ran(T+i)^{\perp}$ is spanned by $(g_{-i} * \mu)'$.
By Lemma \ref{tneu2},
$A$ and $S_U$ have the same essential spectrum
and the same absolutely continuous spectrum and hence
\begin{eqnarray}\label{def6b}
\sigma_{ess}(A) = [0, \infty) = \sigma_{ac}(A),
\end{eqnarray}
and the number, counting multiplicities, of negative eigenvalues of $A$
is less than or equal to one.
\end{example}

{\it Acknowledgments.\/}   The second author (L.N.) expresses his
gratitude to DFG for a financial support of the project DFG  BR
1686/2-1  and thanks the Institute of Mathematics at TU  of
Clausthal  for the warm hospitality.

\end{document}